\newtheorem{Thm}{Theorem}    % theorem like environments
\newtheorem{Lem}{Lemma}
\newtheorem{Pro}{Proposition}
\newtheorem{Cor}{Corollary}
\theoremstyle{remark}	
\newtheorem{Rem}{Remark}
\newcommand{\N}{\mathbb{N}}        %  natural numbers 
\newcommand{\mybox}{\, {\scriptstyle \Box}\,}
\newcommand{\mydiam}{\, {\scriptstyle \Diamond}\,}
\newcommand{\VN}{\mathit{VN}}
\newcommand{\C}{\mathbb{C}}
\begin{document}
\title{The centre of the bidual of Fourier algebras (discrete groups)} 
\author{Viktor  Losert}
\address{Institut f\"ur Mathematik, Universit\"at Wien, Strudlhofg.\ 4,
  A 1090 Wien, Austria}
\email{VIKTOR.LOSERT@UNIVIE.AC.AT}
\subjclass[2010]{Primary 43A35; Secondary 22D15, 20E05, 43A07, 46J10}
\date{15 April 2016}

\begin{abstract}
For a discrete group $G$ with Fourier algebra $A(G)$, we study the topological
centre $\mathfrak Z_t$ of the bidual $A(G)''$. If $G$ is amenable, then
$\mathfrak Z_t=A(G)$. But if $G$ contains a free group $F_r$ ($r\ge2$), we
show that $\mathfrak Z_t$ is strictly larger than $A(G)$. Furthermore, it
is shown that the subalgebra $A_\# (G)$ of radial functions in $A(G)$ is
Arens regular.
\end{abstract}

\maketitle
\baselineskip=1.3\normalbaselineskip
%---------------------------Anfang------------------------
Let $A$ be a Banach algebra, $A'$ its dual space. For $f\in A',\;a\in A$\,,
the value of the functional will be written as $\langle f,a\rangle$\,. The
(first) {\it Arens product} $\mybox$ on the bidual space $A''$ is defined as
follows\vadjust{\vskip 1mm} (see also \cite{U}\,p.\,370,
\cite{D}\,(2.6.27), \cite{P}\,Def.\,1.4.1):
\\ First, a right action of $A$ on
$A'$ is defined by
$\langle f\cdot a,b\rangle=\langle f,a\,b\rangle\ (a,b\in A\,,f\in A')$.
Then a left action of $A''$ on $A'$ by
$\langle \Psi\cdot f,a\rangle=\langle\Psi,f\cdot a\rangle\ (\Psi\in
A'')$. Finally, $\langle\Phi\mybox\Psi,f\rangle=\langle\Phi,\Psi\cdot
f\rangle\ \ (\Psi,\Phi\in A'')$.
\\
This makes $A''$ into a Banach algebra and the product is
weak*--continuous when the second factor is fixed. The {\it topological
centre} of $A''$ is defined as
$$\mathfrak Z_t(A'')=\{\Phi\in
A'':\ \Psi\mapsto\Phi\mybox\Psi~ \mbox{is weak*--continuous}\}\;.$$
This can also be described by the condition
$\Psi\mybox\Phi=\Psi\mydiam\Phi$ for all $\Psi\in A''$, where
$\mydiam$ denotes the second Arens product (\cite{D}\,Def.\,2.6.19). If
$A$ is commutative, $\mathfrak Z_t(A'')$ coincides with the algebraic
centre of $A''$. The algebra $A$ is called {\it Arens regular}, if
$\mathfrak Z_t(A'')=A''$.
We will always consider $A$ as a subalgebra of $A''$ (via the
canonical embedding) and, if $A_0$ is a subalgebra of $A$\,, then
$A_0''$ will be identified with the corresponding subalgebra of $A''$
(via the bidual mapping of the inclusion, see also \cite{D}\,p.\,251).
\vspace{0,5cm}

Our principal object will be the Fourier algebra $A(G)$ for discrete
groups $G$\,. It consists of the coefficient functions of the left
regular representation $\rho$ of $G$ on $l^2(G)$ \
(\,$(\rho(g)x)(h)=x(g^{-1}h)$ for $g,h\in G\,,\,x\in l^2(G)$\,), see \cite{E}
for further properties. $A(G)$ is a commutative Banach algebra with respect to
pointwise multiplication, $l ^2(G)\subseteq A(G)\subseteq c_0(G)$. If
$G$ is commutative, $A(G)$ is isomorphic to $L^1(\widehat G)$ (with
convolution), where $\widehat G$ denotes the dual group of $G$\,. General
results on $A(G)''$ can be found in \cite{L0}. Questions
about the topological centre $\mathfrak Z_t(A(G)'')$ have already been
investigated in \cite{LL}, \cite{LL1}. In particular, the case where
$G$ is discrete and amenable was settled in \cite{LL} (when $G$ is
commutative, this goes back further to \cite{LL0}, see also \cite{N}).
For the convolution algebras $L^1(G)$ (and related ones) an account of
results on the topological centre has been given in \cite{BIP}.
%\pagebreak
\begin{Thm}	  % Theorem 1
If $G$ is a discrete amenable group, then \;$\mathfrak Z_t (A(G)'') = A(G)$.
\end{Thm}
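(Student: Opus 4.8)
The plan is to establish the nontrivial inclusion $\mathfrak Z_t(A(G)'')\subseteq A(G)$; the reverse one $A(G)\subseteq\mathfrak Z_t(A(G)'')$ holds for every Banach algebra, since for $a\in A(G)$ one has $\langle a\mybox\Psi,f\rangle=\langle\Psi,f\cdot a\rangle$, which is weak*--continuous in $\Psi$. I shall work with the identification $A(G)'=\VN(G)$, where $\VN(G)=\rho(G)''$ is the group von Neumann algebra; then $A(G)''=\VN(G)^*$ and $A(G)=\VN(G)_*$ sits inside it as the space of normal functionals, so that ``$\Phi\in A(G)$'' means exactly ``$\Phi$ is weak*--continuous on $\VN(G)$''. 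Amenability will enter at two clearly separated places.

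First I fix $\Phi\in\mathfrak Z_t(A(G)'')$ and use amenability via Leptin's theorem to get a contractive approximate identity $(e_\alpha)$ of $A(G)$; passing to a subnet I may assume $e_\alpha\to E$ weak* in $A(G)''$. Since $be_\alpha\to b$ in $A(G)$ one checks $E\cdot f=f$ for all $f\in\VN(G)$, hence $\Phi\mybox E=\Phi$; and as $\Phi$ is in the topological centre, $\Psi\mapsto\Phi\mybox\Psi$ is weak*--continuous, so $\Phi=\Phi\mybox E$ is the weak* limit of $(\Phi\mybox e_\alpha)_\alpha$. (Incidentally, $A(G)$ is a closed ideal in $(A(G)'',\mybox)$: since $G$ is discrete, $c_{00}(G)$ is dense in $A(G)$ and its norm is submultiplicative, so every multiplication operator $b\mapsto ab$ is a norm-limit of finite-rank operators $b\mapsto a_nb$ with $a_n\in c_{00}(G)$, hence compact. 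Thus $\Phi$ is in fact a weak*-limit of a bounded net lying in $A(G)$, the only difficulty being that such a limit might in principle escape $A(G)$.)

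The next step is a reduction: it suffices to prove that the restriction $\Phi|_{C^*_r(G)}$ is weak*--continuous, where $C^*_r(G)=\overline{\operatorname{span}\,\rho(G)}\subseteq\VN(G)$ is the reduced group $C^*$-algebra. Indeed, from $\rho(g)\cdot a=a(g)\,\rho(g)$ for $a\in A(G)$ one reads off $\overline{\VN(G)\cdot A(G)}=C^*_r(G)$, so every $e_\alpha\cdot f=f\cdot e_\alpha$ lies in $C^*_r(G)$. If $\Phi|_{C^*_r(G)}$ is weak*--continuous it agrees there with $\phi|_{C^*_r(G)}$ for some $\phi\in A(G)$, and then
$$\langle\Phi,f\rangle=\lim_\alpha\langle\Phi,e_\alpha\cdot f\rangle=\lim_\alpha\langle f\cdot e_\alpha,\phi\rangle=\lim_\alpha\langle f,e_\alpha\phi\rangle=\langle f,\phi\rangle$$
(the second equality uses $e_\alpha\cdot f\in C^*_r(G)$, $\Phi|_{C^*_r(G)}=\phi|_{C^*_r(G)}$ and $e_\alpha\cdot f=f\cdot e_\alpha$; the last uses $e_\alpha\phi\to\phi$ in $A(G)$), whence $\Phi=\phi\in A(G)$.

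What remains --- and this is where I expect the real difficulty to lie --- is to show that \emph{for $\Phi\in\mathfrak Z_t(A(G)'')$ the functional $\Phi|_{C^*_r(G)}$ is weak*--continuous}, equivalently that the singular part of $\Phi$, regarded as a functional on the $C^*$-algebra $C^*_r(G)$, vanishes. Here amenability must be used genuinely, beyond supplying the approximate identity: one exploits $C^*_r(G)=C^*(G)$ together with an invariant mean on $G$ to turn a hypothetical nonzero singular part into a weak*--convergent net $\Psi_\beta\to\Psi$ in $A(G)''$ along which $\Phi\mybox\Psi_\beta$ fails to converge weak*, contradicting $\Phi\in\mathfrak Z_t(A(G)'')$. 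For discrete amenable $G$ this conclusion is due to \cite{LL} (going back, when $G$ is commutative, to \cite{LL0}; see also \cite{N}), and it is precisely this last mechanism that collapses once $G$ contains a free subgroup, which is the starting point for the rest of the paper.
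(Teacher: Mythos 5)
Your proposal cannot really be measured against an argument in the paper, because the paper proves Theorem~1 purely by citation (\cite{LL}\,Th.\,6.5\,(i), alternatively \cite{U}\,Cor.\,2.4). Everything you actually write out is correct: the trivial inclusion $A(G)\subseteq\mathfrak Z_t(A(G)'')$; the use of Leptin's theorem to produce a contractive approximate identity $(e_\alpha)$ with weak* cluster point $E$ satisfying $E\cdot f=f$, hence $\Phi\mybox E=\Phi$; the observation that $A(G)$ is an ideal in $A(G)''$ for discrete $G$; and the reduction of the theorem to the claim that, for $\Phi\in\mathfrak Z_t(A(G)'')$, the restriction $\Phi|_{C_\rho^*(G)}$ is weak*--continuous, i.e.\ that $p(\Phi)\in A(G)$ in the notation of Lemma~2. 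This reduction faithfully reproduces the architecture of the Lau--Losert proof: Remark~4 of the paper records that the essential step of \cite{LL} is precisely the statement $p\big(\mathfrak Z_t(A(G)'')\big)=A(G)$ for amenable $G$.

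But that essential step is the one you do not prove. Your final paragraph only asserts that ``amenability must be used genuinely'' to kill the singular part of $\Phi|_{C_\rho^*(G)}$, and then cites \cite{LL} for the conclusion; since the passage from that claim back to the theorem is the easy half (your displayed computation), you have in effect restated the theorem in an equivalent form and cited it. Judged as a self-contained proof, this is a genuine gap, and it sits exactly where all the difficulty of the theorem is concentrated. The shortest route the paper itself endorses for closing it is \"Ulger's: for discrete $G$ one has the characterization that $\Phi\in\mathfrak Z_t(A(G)'')$ if and only if $\Phi\mybox A(G)''\subseteq A(G)$ and $A(G)''\mybox\Phi\subseteq A(G)$ (\cite{U}\,Th.\,2.2, quoted in the proof of Proposition~3), after which your identity $\Phi=\Phi\mybox E$ yields $\Phi\in\Phi\mybox A(G)''\subseteq A(G)$ in one line --- but the nontrivial direction of that characterization would then itself have to be proved, and it is of the same depth as the step you left to \cite{LL}.
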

\noindent
Banach algebras with this property are sometimes called {\it left strongly
Arens\linebreak irregular}.
\begin{proof}
This is [LL]\,Th.\,6.5\,(i). See also [U]\,Cor.\,2.4 for another argument.
\end{proof}
\vspace{0,5cm}

For non--amenable groups, we restrict mainly to the case where $G$ has a
(non--abelian) free subgroup $F_r$ (see also Remark\,7). If $H$ is a
subgroup of $G$\,, then $A(H)$ can be identified with a subalgebra of
$A(G)$ (consisting of the functions supported by $H$). $B_\rho(G)$
consists of those functions on $G$ that are pointwise limits of
bounded nets (in the $A(G)$--norm) of functions from $A(G)$.
Alternatively, the elements of $B_\rho(G)$ are the coefficients of the
unitary representations of $G$ that are weakly contained in the
regular representation $\rho$\,. $B_\rho(G)$ is again a Banach algebra
under pointwise multiplication.

For $g\in F_r$\,, we denote by $|g|$ the length of the reduced word representing
$g$ (for a fixed choice of free generators). A function $x$ on $F_r$
is called {\it radial} \,if $|g|=|g'|$ implies $x(g)=x(g')$. For finite $r$\,,
we denote by $A_\#(F_r)$ the set of radial elements of $A(G)$ \;(similarly,
$B_{\#,\rho}(F_r)$ in $B_\rho(F_r)$\,). $A_\#(F_r)$ (resp.
$B_{\#,\rho}(F_r)$\,) is a closed subalgebra of $A(F_r)$ (resp.
$B_\rho(F_r)$\,). The harmonic analysis of these spaces (and related
ones) has been described in \cite{FP} (they denote the regular
representation by $\lambda$ and write $B_\lambda(G)$ etc.). Our main
object will be to apply the results of \cite{FP} to questions on Arens
products. This can also be seen as a special example of a symmetric
space (when considering a homogeneous tree of degree $2r$ with a fixed
base point, $F_r$ is isomorphic to the group of ``translations'' of
the tree, radial functions are characterized by invariance under the
``rotations'' of the tree). Results on general symmetric pairs have
been given in \cite{M}.

\begin{Thm}	     % Theorem 2
For $G=F_r$\,, the free group with $r$ generators, where $r \geq 2$ is
finite, the algebra \,$A_\# (G)$ \,of radial elements of $A(G)$ and
the algebra \,$B_{\#,\rho}(G)$ \,of radial elements of $B_\rho(G)$ are Arens
regular. \newline
Let \
$J = \{\,\Phi \in A_\# (G)'' :\ \Phi\,\mybox A_\# (G) = (0)\,\}$ \;be the
annihilator of $A_\# (G)$ in $A_\# (G)''$. Then $J$ coincides with the
(left or right) annihilator of $A_\# (G)''$ \;and \linebreak
$A_\# (G)''/J \,\cong\, B_{\#,\rho} (G)$.
\end{Thm}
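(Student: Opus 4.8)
The plan is to derive both assertions from the spherical harmonic analysis of $F_r$ in \cite{FP}. Write $S_n=\{g\in F_r:|g|=n\}$, $s_n=\chi_{S_n}$, and for a radial $u$ let $u(n)$ denote its common value on $S_n$; note $|S_n|\ge q^n$ for $n\ge1$, where $q=2r-1$. I would first record the two facts needed from \cite{FP}. \emph{(i)} Radialization $R\colon u\mapsto u_\#$, where $u_\#(g)=|S_{|g|}|^{-1}\sum_{|h|=|g|}u(h)$, is a norm-one projection on $A(G)$, on $B_\rho(G)$, on $VN(G)$ and on $C^*_\rho(G)$; hence $A_\#(F_r)'$ is identified with the weak$^*$-closed radial part $\VN_\#(F_r)=R'(VN(F_r))\subseteq VN(F_r)$, which contains weak$^*$-densely the separable commutative $C^*$-algebra $C^*_{\#,\rho}(F_r)$ (the operator-norm-closed linear span of $\{\rho(s_n):n\ge0\}$), and $B_{\#,\rho}(F_r)$ is identified isometrically with $C^*_{\#,\rho}(F_r)'$; moreover $A_\#(F_r)$ is a closed subalgebra of $B_{\#,\rho}(F_r)$ (as $A(G)$ sits isometrically in $B_\rho(G)$, by Kaplansky density). \emph{(ii)} $\|\rho(s_n)\|\le C(n+1)q^{n/2}$; writing a radial $u\in B_\rho(F_r)$ as a coefficient $\langle\pi(\cdot)\xi,\eta\rangle$ with $\pi\prec\rho$ gives $|u(n)|=|S_n|^{-1}\,|\langle\pi(s_n)\xi,\eta\rangle|\le|S_n|^{-1}\|\rho(s_n)\|\,\|\xi\|\,\|\eta\|$, so
$$|u(n)|\le C\,(n+1)\,q^{-n/2}\,\|u\|_{B_\rho(F_r)}\qquad(n\ge0).$$

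\emph{Arens regularity.} Here $r\ge2$, so $q\ge3>1$ and $\sum_n|S_n|(n+1)^4q^{-2n}<\infty$. Hence for radial $u,v\in B_\rho(F_r)$ the (radial) pointwise product satisfies $\|uv\|_{\ell^2(F_r)}^2=\sum_n|S_n|\,|u(n)v(n)|^2\le C'\,\|u\|_{B_\rho}^2\|v\|_{B_\rho}^2$, so multiplication maps $B_{\#,\rho}(F_r)\times B_{\#,\rho}(F_r)$ boundedly into the reflexive space $\ell^2_\#(F_r)$ of radial $\ell^2$-functions, which embeds contractively into $B_{\#,\rho}(F_r)$. The same estimate shows this bilinear map is separately continuous from $\sigma(B_{\#,\rho}(F_r),C^*_{\#,\rho}(F_r))$ to the weak topology of $\ell^2_\#(F_r)$: if $u_\alpha\to u$ weak$^*$ (boundedly) then $u_\alpha(n)=\langle u_\alpha,|S_n|^{-1}\rho(s_n)\rangle\to u(n)$ for each $n$, and for $h\in\ell^2_\#(F_r)$ the series $\langle u_\alpha v,h\rangle_{\ell^2}=\sum_n|S_n|\,u_\alpha(n)v(n)\overline{h(n)}$ converges uniformly in $\alpha$ by (ii) and dominated convergence, so $\langle u_\alpha v,h\rangle_{\ell^2}\to\langle uv,h\rangle_{\ell^2}$. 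Since $C^*_{\#,\rho}(F_r)$ is separable, bounded sequences in $B_{\#,\rho}(F_r)=C^*_{\#,\rho}(F_r)'$ have weak$^*$-convergent subsequences; combined with the separate continuity just shown, this forces the two iterated limits of $\langle\lambda,u_nv_m\rangle$ to agree, for every $\lambda\in B_{\#,\rho}(F_r)'$ and all bounded sequences $(u_n),(v_m)$, whenever both exist (each equals $\langle\lambda,uv\rangle$ for weak$^*$-limit points $u,v$). By Grothendieck's double-limit criterion every operator $u\mapsto\lambda\cdot u$, $B_{\#,\rho}(F_r)\to B_{\#,\rho}(F_r)'$, is weakly compact, so $B_{\#,\rho}(F_r)$ is Arens regular. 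Then $A_\#(F_r)$ is Arens regular as well: being a closed subalgebra of the commutative Arens-regular algebra $B_{\#,\rho}(F_r)$, its bidual embeds (as noted in the Introduction) as a subalgebra of the commutative algebra $B_{\#,\rho}(F_r)''$, and is therefore commutative.

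\emph{The ideal $J$.} For radial $g\in A_\#(F_r)$ one has $s_ng=g(n)s_n$ pointwise, hence for $T\in\VN_\#(F_r)$ the module product is $s_n\cdot T=\langle T,s_n\rangle\,|S_n|^{-1}\rho(s_n)\in C^*_{\#,\rho}(F_r)$. As finitely supported radial functions are dense in $A_\#(F_r)$ (by \cite{E} plus radialization), it follows that $A_\#(F_r)\cdot\VN_\#(F_r)\subseteq C^*_{\#,\rho}(F_r)$; and taking $T=|S_n|^{-1}\rho(s_n)$, for which $\langle T,s_n\rangle=1$, shows $|S_n|^{-1}\rho(s_n)\in A_\#(F_r)\cdot\VN_\#(F_r)$, whence $\overline{A_\#(F_r)\cdot\VN_\#(F_r)}=C^*_{\#,\rho}(F_r)$. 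Since $\langle\Phi\mybox g,T\rangle=\langle\Phi,g\cdot T\rangle$ for $g\in A_\#(F_r)$, $T\in\VN_\#(F_r)$, this identifies $J$ with the annihilator $C^*_{\#,\rho}(F_r)^\perp$ inside $A_\#(F_r)''=\VN_\#(F_r)'$. Let $\pi\colon A_\#(F_r)''\to C^*_{\#,\rho}(F_r)'=B_{\#,\rho}(F_r)$ be restriction of functionals; then $\pi$ is a metric surjection with $\ker\pi=J$, is weak$^*$-to-weak$^*$ continuous (as $C^*_{\#,\rho}(F_r)\subseteq\VN_\#(F_r)$), and on the canonical copy of $A_\#(F_r)$ coincides with the inclusion $A_\#(F_r)\hookrightarrow B_{\#,\rho}(F_r)$ (since $\langle\pi(f),|S_n|^{-1}\rho(s_n)\rangle=f(n)$). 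Using that $A_\#(F_r)\subseteq\mathfrak Z_t(A_\#(F_r)'')$, that $B_{\#,\rho}(F_r)$ has separately weak$^*$-continuous multiplication (a dual Banach algebra with predual $C^*_{\#,\rho}(F_r)$), and the weak$^*$-density of $A_\#(F_r)$ in its bidual, one writes $\Phi\mybox\Psi$ as an iterated weak$^*$-limit of products of elements of $A_\#(F_r)$ and applies $\pi$ to get $\pi(\Phi\mybox\Psi)=\pi(\Phi)\,\pi(\Psi)$; hence $\pi$ induces an isometric algebra isomorphism $A_\#(F_r)''/J\cong B_{\#,\rho}(F_r)$. Finally, for $\Phi\in J=C^*_{\#,\rho}(F_r)^\perp$ and $S\in\VN_\#(F_r)$ one has $\langle\Phi\cdot S,g\rangle=\langle\Phi,S\cdot g\rangle=0$ for all $g\in A_\#(F_r)$ (because $S\cdot g\in C^*_{\#,\rho}(F_r)$), so $\Phi\cdot S=0$ and therefore $\langle\Psi\mybox\Phi,S\rangle=\langle\Psi,\Phi\cdot S\rangle=0$ for every $\Psi$; thus $A_\#(F_r)''\mybox\Phi=(0)$, and by commutativity $\Phi\mybox A_\#(F_r)''=(0)$. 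Since a one-sided annihilator of $A_\#(F_r)''$ in particular annihilates $A_\#(F_r)$ and so lies in $J$, the ideal $J$ is exactly the left annihilator and exactly the right annihilator of $A_\#(F_r)''$.

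The step I expect to be the real obstacle is assembling the input from \cite{FP}: checking that radialization is a contractive projection on all of $A(G)$, $B_\rho(G)$, $VN(G)$ and $C^*_\rho(G)$ simultaneously (so that the identifications $A_\#(F_r)'=\VN_\#(F_r)$ and $B_{\#,\rho}(F_r)=C^*_{\#,\rho}(F_r)'$ are legitimate and $C^*_{\#,\rho}(F_r)$ really is a commutative $C^*$-subalgebra of $VN(F_r)$), and extracting the uniform norm estimate $\|\rho(s_n)\|\le C(n+1)q^{n/2}$ in a form strong enough to drive the dominated-convergence arguments. The place where the hypothesis enters is exactly $q=2r-1>1$, which (and only which) makes $\sum_n|S_n|(n+1)^4q^{-2n}$ converge and thereby pushes products of radial $B_\rho$-functions into the reflexive space $\ell^2_\#(F_r)$; for $r=1$ the series diverges, matching the fact that $A_\#(\mathbb Z)$ is not Arens regular.
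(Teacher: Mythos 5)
Your proof is correct and follows essentially the same route as the paper: the decay estimate $|u(n)|\lesssim (n+1)(2r-1)^{-n/2}\,\|u\|_{B_\rho}$ (the paper uses the spherical function $\varphi_{1/2}$ from \cite{FP} in Proposition\,2 and notes your Haagerup--inequality variant in Remark\,1) forces products of radial elements into $\ell^2$, whence weak compactness of multiplication yields Arens regularity, and the quotient $A_\#(F_r)''/J\cong B_{\#,\rho}(F_r)$ is realized by the restriction map to $C^*_{\#,\rho}(F_r)$ (the paper's $p_\#$). The differences are only organizational: the paper packages the compactness step as Propositions 1--2 and extracts Theorem\,2 from the proof of Theorem\,3 via the homomorphism $p$, whereas you verify the double-limit criterion directly and treat $B_{\#,\rho}(F_r)$ before $A_\#(F_r)$.
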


\begin{Thm}	  % Theorem 3
If $G$ is a discrete group containing $F_r$ (the free group with $r$
generators), where $r \geq 2$ is finite, then \;
$A_\# (F_r)'' \subseteq \mathfrak Z_t (A (G)'')$.\newline
In particular, $\mathfrak Z_t (A (G)'') \not= A (G)$.
\end{Thm}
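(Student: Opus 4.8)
\emph{Strategy.} The plan is to reduce, by an open--subgroup argument, to the case $G=F_r$, and then to deduce $A_\#(F_r)''\subseteq\mathfrak Z_t(A(F_r)'')$ from a weak--compactness statement about multiplication operators, using the Arens regularity of $A_\#(F_r)$ (Theorem~2) and the harmonic analysis of \cite{FP}.

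\emph{Reduction to $G=F_r$.} Put $H=F_r$. As $H$ is open in $G$, its indicator $1_H$ is positive definite with $\|1_H\|_{B(G)}=1$, so pointwise multiplication by $1_H$ is a contractive idempotent homomorphism $E\colon A(G)\to A(G)$ onto the closed subalgebra $A(H)$ (the functions of $A(G)$ supported by $H$, realised via extension by zero). Then $E''$ is a weak*--continuous idempotent homomorphism of $(A(G)'',\mybox)$ whose range, being its fixed--point set, is the weak*--closed subalgebra $A(H)''$. Starting from the identity $uv=u\,E(v)$ for $u\in A(H),\,v\in A(G)$, and using weak*--density of $A(G)$ in $A(G)''$, weak*--continuity of $E''$, and $A(G)\subseteq\mathfrak Z_t(A(G)'')$, I would first obtain $u\mybox\Psi=u\mybox E''(\Psi)$ for $u\in A(H)$, $\Psi\in A(G)''$, and then, both sides being weak*--continuous in the first variable and coinciding on the weak*--dense set $A(H)$,
$$\Phi\mybox\Psi=\Phi\mybox E''(\Psi)\qquad(\Phi\in A(H)'',\ \Psi\in A(G)'').$$
Hence, if $\Phi\in A(H)''$ and $\Psi\mapsto\Phi\mybox\Psi$ is weak*--continuous on $A(H)''$, the same holds on $A(G)''$ (approximate $\Psi$ weak*, apply $E''$, use the hypothesis inside $A(H)''$, and transfer back by the identity). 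Thus $\mathfrak Z_t(A(F_r)'')\subseteq\mathfrak Z_t(A(G)'')$, and it suffices to prove $A_\#(F_r)''\subseteq\mathfrak Z_t(A(F_r)'')$.

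\emph{The free--group case.} Fix $\Phi\in A_\#(F_r)''$. Since $\langle\Phi\mybox\Psi,f\rangle=\langle\Phi,\Psi\cdot f\rangle$, while $\Psi\mapsto\Psi\cdot f$ is weak*--continuous from $A(F_r)''$ into $A(F_r)'$ and $A(F_r)''\cdot f$ lies in the weak*--closure $S_f$ of $A(F_r)\cdot f$ in $A(F_r)'$, the weak*--continuity of $\Phi\mybox(\cdot)$ will follow once $\Phi$ is shown to be weak*--continuous on $S_f$ for every $f\in\VN(F_r)=A(F_r)'$. Because the embedding $A_\#(F_r)''\hookrightarrow A(F_r)''$ is adjoint to the restriction map $A(F_r)'\to A_\#(F_r)'$, the functional $\Phi$ sees $A(F_r)\cdot f$ only through the image of that restriction, i.e. through the range of the bounded operator $T_f\colon A(F_r)\to A_\#(F_r)'$, $b\mapsto(f\cdot b)|_{A_\#(F_r)}$. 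So it is enough to show that $T_f$ is \emph{weakly compact} for every $f\in\VN(F_r)$: then the weak*--closure of $T_f$ of a bounded set is weakly compact, so the weak and weak* topologies agree there and $\Phi$, being weakly continuous, is weak*--continuous. For $f$ in the norm--dense subalgebra $C^*_\rho(F_r)$ this is immediate: if $f=\sum_g c_g\rho(g)$ is finitely supported, $T_f$ has finite rank — it depends on $b$ and on the radial test functions only through the finitely many spheres of $F_r$ involved — and $\|T_f\|\le\|f\|_{\VN}$ then gives compactness of $T_f$ for all $f\in C^*_\rho(F_r)$. The remaining case, $f\in\VN(F_r)\setminus C^*_\rho(F_r)$ — precisely where $A(F_r)$ itself is not Arens regular, but where $\Phi$ is tamed by being radial — is the heart of the matter.

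\emph{The main obstacle, and the conclusion.} Proving that $T_f$ is weakly compact for an arbitrary $f\in\VN(F_r)$ is the crux, and is where I would invoke \cite{FP}. Using the bounded radial projection $Q\colon A(F_r)\to A_\#(F_r)$ of \cite{FP} (an $A_\#(F_r)$--module map, $Q(uv)=uQ(v)$), split $T_f=T_f\circ Q+T_f\circ(\mathrm{id}-Q)$. The radial part $T_f\circ Q$ is the map $b\mapsto(f|_{A_\#(F_r)})\cdot Q(b)$, which is weakly compact because $A_\#(F_r)$ is Arens regular (Theorem~2); the non--radial part $T_f\circ(\mathrm{id}-Q)$ is to be controlled through the spherical--analysis description of $A(F_r)$ in \cite{FP} — the decomposition of the complement of the radial functions into the isotypic components of the rotation group of the homogeneous tree of degree $2r$, each an $A_\#(F_r)$--submodule on which $A_\#(F_r)$ acts essentially as on itself — the norm estimates of \cite{FP} for the associated (generalized) spherical functions then yielding weak compactness there as well. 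I expect this step, i.e. controlling $f\cdot b$ finely enough on the non--radial part, to be the main obstacle. Granting it, $A_\#(F_r)''\subseteq\mathfrak Z_t(A(F_r)'')\subseteq\mathfrak Z_t(A(G)'')$. Finally $\mathfrak Z_t(A(G)'')\neq A(G)$: $A_\#(F_r)$ is not reflexive — otherwise $A_\#(F_r)''=A_\#(F_r)$ and the annihilator $J$ would be the annihilator of $A_\#(F_r)$ within itself, which vanishes (a function algebra), forcing $B_{\#,\rho}(F_r)\cong A_\#(F_r)''/J=A_\#(F_r)$, contrary to $B_{\#,\rho}(F_r)\supsetneq A_\#(F_r)$ — so pick $\Phi\in A_\#(F_r)''\setminus A_\#(F_r)$; applying $E''$ and $Q''$ gives $A(F_r)''\cap A(G)=A(F_r)$ and $A_\#(F_r)''\cap A(F_r)=A_\#(F_r)$, so this $\Phi\in\mathfrak Z_t(A(G)'')$ cannot lie in $A(G)$.
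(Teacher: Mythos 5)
Your overall architecture is sound in outline (reduce to a weak--compactness statement about the multiplication maps induced by $f\in\VN(F_r)$, which is indeed a correct sufficient condition for $\Phi\mybox(\cdot)$ to be weak*--continuous when $\Phi\in A_\#(F_r)''$), and your reduction to $G=F_r$ and your non--reflexivity argument for the final assertion are essentially correct. But the proof has a genuine gap exactly where you say you expect one: the weak compactness of $T_f$ for general $f\in\VN(F_r)$ is never established. Your splitting $T_f=T_f\circ Q+T_f\circ(\mathrm{id}-Q)$ only disposes of the radial part (via Arens regularity of $A_\#(F_r)$, i.e.\ Theorem~2 --- which is itself proved in the paper \emph{after} and \emph{by means of} Theorem~3, so there is also a circularity risk in how you invoke it), and the non--radial part is left as an appeal to an unspecified ``isotypic decomposition'' with unspecified norm estimates. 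That is precisely the analytic heart of the theorem, and nothing in your sketch shows it can be controlled; the relevant estimates in \cite{FP} concern radial functions, not the isotypic components of $\mathrm{id}-Q$.

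The paper's route is different and more economical: it proves a \emph{uniform norm} compactness statement (Proposition~2) rather than weak compactness of individual operators. The key inputs are the pointwise majorization $|x(t)|\le\varphi_{1/2}(t)=(1+\frac{r-1}{r}|t|)(2r-1)^{-|t|/2}$ for radial $x$ with $\|x\|\le1$, the sphere--wise estimate $\sum_{|t|=n}|y(t)|^2\le\|y\|^2\sum_{|t|=n}\Xi(t)$ for arbitrary $y\in B_\rho(G)$, and the fact that $\varphi_{1/2}\in l^3(F_r)$; together these show that $\{xy:\|x\|,\|y\|\le1\}$ is relatively compact in $l^2(F_r)\subseteq A(F_r)$. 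Proposition~1 then gives $A_\#(F_r)''\mybox A(G)''\subseteq A(G)$ and $A(G)''\mybox A_\#(F_r)''\subseteq A(G)$ directly for the ambient group $G$ (no reduction to $G=F_r$ is needed), and commutativity in the centre follows in one line by applying the homomorphism $p:A(G)''\to B_\rho(G)$ of Lemma~2 and using commutativity of $B_\rho(G)$. So: your sufficient condition is the right kind of statement, but to complete your proof you would have to supply the $\ell^2$/$\ell^3$ estimates of Proposition~2 (or an equivalent), and without them the argument does not close.
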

\noindent
The proofs of the Theorems 2 and 3 will be given after several
auxiliary results.
\vspace{0,5cm}

We discuss some further notations. $C ^*_\rho(G)$ denotes the $C^*$--algebra
(on $l^2(G)$) generated by the operators
$\rho(g)\ (g\in G),\ \VN(G)$ stands for the von Neumann algebra generated by
$C^*_\rho(G)$. \,$\VN(G)$ can be identified with a subset of $l^2(G)$
(each operator in $\VN(G)$ is uniquely determined by the image of the
unit vector $\delta_e$ concentrated at the identity $e$ of $G$\,).
$B_\rho(G)$ is isomorphic to the dual space of $C ^*_\rho(G),\ \VN(G)$ is
isomorphic to the dual space of $A(G)$. The (left or right) action of
$A(G)$ on its dual $\VN(G)$ is again given by pointwise multiplication.
If $H$ is a subgroup of $G$, $\VN(H)$ is (isomorphic to) a subalgebra
of $\VN(G)$, and if $E_H(x)$ denotes the restriction of $x$ to $H$\,, then
$E_H\!:\VN(G)\rightarrow \VN(H)$ gives a conditional expectation. We have
again subalgebras in $C_{\#,\rho}^*(F_r)$ (resp. $\VN_\#(F_r)$) of the radial
elements in $C_\rho ^*(F_r)$ (resp. $\VN(F_r)$\,). $E(y)=y ^\#$
(defined as in the proof of Proposition\,2 below) gives a conditional
expectation $E\!:\VN(F_r)\rightarrow \VN_\#(F_r)$ (resp.
$E\!:\VN(G)\rightarrow \VN_\#(F_r)$, when $F_r$ is a subgroup of $G$\,), see
\cite{FP}\,Ch.\,3,\,L.\,1.3.

\begin{Lem}	 % Lemma 1
Let $A$ be a Banach algebra.
\begin{enumerate}
\item[(i)] $\Phi \in A''$ vanishes on $A'' \cdot A'$ if and only if
\,$\Phi \mybox A'' = (0)$.
\item[(ii)] If $A$ is commutative, then $\Phi \mybox A'' = (0)$ implies
$A'' \mybox \Phi = (0)$. In particular, it follows that
$\Phi \in\mathfrak Z_t (A'')$.
\end{enumerate}
\end{Lem}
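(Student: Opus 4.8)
The plan is to unwind the definitions of the Arens product and the module actions, keeping in mind the basic algebraic identities: for $\Phi,\Psi\in A''$, $f\in A'$, $a\in A$ we have $\langle\Phi\mybox\Psi,f\rangle=\langle\Phi,\Psi\cdot f\rangle$, and $\langle\Psi\cdot f,a\rangle=\langle\Psi,f\cdot a\rangle$.

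For part (i), first I would observe that $\Phi\mybox A''=(0)$ means $\langle\Phi,\Psi\cdot f\rangle=0$ for all $\Psi\in A''$ and all $f\in A'$. The key point is to identify the set $\{\Psi\cdot f:\Psi\in A'',f\in A'\}$ with $A''\cdot A'$ and to check it is norm-closed (or at least that its closed linear span is $\overline{A''\cdot A'}$), so that $\Phi$ annihilating all such elements is literally the statement that $\Phi$ vanishes on $A''\cdot A'$. One direction is immediate; for the converse I would note that weak*-density of $A$ in $A''$ and the separate weak*-continuity of $\Psi\mapsto\Psi\cdot f$ let me reduce $\Psi\cdot f$ for general $\Psi$ to limits of $a\cdot f$ with $a\in A$, but in fact the cleaner route is just to take the definition of $A''\cdot A'$ as the linear span of all $\Psi\cdot f$ and read off the equivalence directly — no density argument is actually needed for (i).

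For part (ii), assuming $A$ commutative and $\Phi\mybox A''=(0)$, I want $A''\mybox\Phi=(0)$, i.e.\ $\langle\Psi\mybox\Phi,f\rangle=0$ for all $\Psi,f$. Writing $\langle\Psi\mybox\Phi,f\rangle=\langle\Psi,\Phi\cdot f\rangle$, it suffices to show $\Phi\cdot f=0$ in $A'$ for every $f$, i.e.\ $\langle\Phi\cdot f,a\rangle=\langle\Phi,f\cdot a\rangle=0$ for all $a\in A$. So I need: $\Phi$ vanishes on $A'\cdot A$. By part (i) the hypothesis gives that $\Phi$ vanishes on $A''\cdot A'$, which contains $A\cdot A'$; the missing ingredient is that commutativity of $A$ forces $f\cdot a=a\cdot f$ (as elements of $A'$, via $\langle f\cdot a,b\rangle=\langle f,ab\rangle=\langle f,ba\rangle=\langle a\cdot f,b\rangle$ — here one must be slightly careful that $a\cdot f$ denotes the action of $a\in A\subseteq A''$ on $f$, which is $\langle a\cdot f,b\rangle=\langle a,f\cdot b\rangle=\langle f\cdot b,a\rangle=\langle f,ba\rangle$). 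Hence $f\cdot a\in A\cdot A'\subseteq A''\cdot A'$, and $\Phi$ vanishes on it. This yields $\Phi\cdot f=0$ and therefore $A''\mybox\Phi=(0)$. Finally, $\Phi\mybox\Psi=0=\Psi\mybox\Phi$ for all $\Psi$ shows trivially that left multiplication by $\Phi$ is weak*-continuous (it is the zero map), so $\Phi\in\mathfrak Z_t(A'')$.

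The only slightly delicate point is bookkeeping the two uses of the dot symbol — the right action of $A$ on $A'$ and the left action of $A''$ on $A'$ — and verifying that for $a\in A$, regarded inside $A''$, the induced left action $a\cdot f$ agrees with what commutativity lets me rewrite via the right action; I expect this compatibility check, rather than any substantial idea, to be the one place to proceed with care.
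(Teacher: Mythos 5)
Your proof is correct. Part (i) is the same immediate unwinding of definitions as in the paper. For part (ii) you take a slightly different, more hands-on route: the paper argues that for commutative $A$ the subalgebra $A$ lies in the algebraic centre of $A''$, deduces $A\mybox\Phi=(0)$ from $\Phi\mybox A=(0)$, and then passes to $A''\mybox\Phi=(0)$ by weak*-continuity of $\Psi\mapsto\Psi\mybox\Phi$ together with weak*-density of $A$ in $A''$. You instead work one level down, on $A'$: using the identity $f\cdot a=a\cdot f$ (which is exactly where commutativity enters, and is the same computation that underlies ``$A$ is central in $A''$''), you show $\Phi\cdot f=0$ for every $f\in A'$ via part (i), and $A''\mybox\Phi=(0)$ then follows without any density or continuity argument. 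Your version is marginally more self-contained (it even yields the slightly stronger conclusion $\Phi\cdot A'=(0)$), while the paper's is shorter because it quotes the standard centrality fact; the careful bookkeeping you flag between the right action $f\cdot a$ and the induced left action $a\cdot f$ is indeed the only point requiring attention, and you handle it correctly. The concluding observation that $\Phi\in\mathfrak Z_t(A'')$ because $\Psi\mapsto\Phi\mybox\Psi$ is the zero map is also fine.
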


\begin{proof}
(i) follows immediately from the definitions of $\mybox$ and
the action of $A''$ on $A'$. For (ii), recall that for $A$
commutative, $A$ is contained in the algebraic centre of $A''$. Hence
$\Phi \mybox A''=(0)$ implies $A\mybox\Phi=(0)$ and then
weak*--continuity of $\mybox$ gives $A''\mybox\Phi=(0)$.
\end{proof}

\noindent
Applied to our case $A=A(G)$ (arguing as in \cite{LL}\,(4) on p.\,25; in the
notation of Lemma\,2, one has $\Phi\cdot y=p(\Phi)\,y$ for $\Phi\in A(G)'',
\;y\in\VN(G)$\,),
the condition $\mathfrak Z_t(A(G)'')=A(G)$ \;implies that \;
$B_\rho(G)\,\VN(G)$
generates a norm--dense subspace in $\VN(G)$ \;(the action of $B_\rho(G)$ on
$\VN(G)$ amounts to pointwise multiplication).

\begin{Pro} 	  % Proposition 1
Let $A$ be Banach algebra with subalgebras $A_0\,,A_1$ and assume that
$\{x\,y\!:\,x\in A_0\,,\,y\in A_1\,,\, \| x \|,\| y \|\leq 1\}$ is weakly
relatively compact. Then \linebreak $A''_0\mybox A''_1\,\subseteq\, A$\,.
\end{Pro}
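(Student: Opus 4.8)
The plan is to exploit the weak relative compactness hypothesis together with the weak*-weak*-continuity of the bidual inclusion maps. First, recall the standard description of the first Arens product via iterated weak* limits: if $\Phi \in A_0''$ and $\Psi \in A_1''$, choose bounded nets $(x_i)$ in $A_0$ and $(y_j)$ in $A_1$ with $x_i \to \Phi$ and $y_j \to \Psi$ weak* in $A''$ (possible by Goldstine's theorem, with the norm bounds preserved up to a fixed constant). Then, by the construction of $\mybox$ recalled in the introduction, one has the iterated-limit formula $\langle \Phi \mybox \Psi, f\rangle = \lim_i \lim_j \langle f, x_i y_j\rangle$ for $f \in A'$; the \emph{outer} limit over $i$ is genuinely weak* because $\mybox$ is weak*-continuous in the first variable after the second is fixed, and I will need to be a little careful about the order since $A_0, A_1$ need not commute with each other inside $A$.

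Next, the key step: after rescaling so that $\|x_i\|, \|y_j\| \le 1$, all the products $x_i y_j$ lie in the set $K = \{xy : x \in A_0, y \in A_1, \|x\|,\|y\|\le 1\}$, whose weak closure $\overline{K}^{\,w}$ is, by hypothesis, weakly compact and (being the weak closure of a subset of $A$) contained in $A$. Fixing $i$, the net $(x_i y_j)_j$ lies in $K$; since $\overline{K}^{\,w}$ is weakly compact it is weakly sequentially compact (Eberlein–Šmulian) and in particular the weak* limit of $(x_i y_j)_j$ in $A''$, which by definition represents $x_i \mybox \Psi = x_i \cdot \Psi$, actually lies in $A$ — because on the bounded set $K$ the weak* topology of $A''$ restricted to $A$ agrees with the weak topology of $A$, so the weak* limit is a weak cluster point of a net in the weakly compact set $\overline{K}^{\,w} \subseteq A$. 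Thus $x_i \mybox \Psi \in A$ for every $i$, and moreover $\|x_i \mybox \Psi\| \le \|\Psi\|$ stays bounded, so the net $(x_i \mybox \Psi)_i$ again lies (after rescaling) in a set of the same shape; repeating the argument — now taking the weak* limit over $i$, which gives $\Phi \mybox \Psi$ — shows $\Phi \mybox \Psi$ is a weak cluster point of a net contained in the weakly compact set $\overline{K}^{\,w}\subseteq A$, hence lies in $A$.

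The cleanest way to package the two applications is: the set $L = \{ z \mybox \Psi : z \in A_0, \|z\| \le 1, \Psi \in A_1'', \|\Psi\|\le 1\}$ satisfies $L \subseteq \overline{K}^{\,w} \subseteq A$ by the first half of the argument, and then $\Phi \mybox \Psi = \lim_i (x_i \mybox \Psi)$ weak* with $(x_i \mybox \Psi)$ a bounded net in $L \subseteq \overline{K}^{\,w}$, which is weakly compact, so the limit stays in $\overline{K}^{\,w} \subseteq A$. I expect the main obstacle to be the bookkeeping in the first half — verifying rigorously that the weak* limit in $A''$ of the net $(x_i y_j)_j \subseteq K$ lands in $A$ rather than merely in $A''$; this is exactly the point where weak (relative) compactness of $K$ is used, via the fact that on $K$ the relative weak* topology of $A''$ coincides with the weak topology of $A$ and weakly compact sets are weakly closed in $A''$. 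Once that is in hand, the second application is a routine repetition, and no further structure of $A_0$ or $A_1$ is needed.
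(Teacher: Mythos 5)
Your argument is correct and is essentially the paper's own proof: both rest on the iterated weak*-limit description of the first Arens product and the observation that the weak closure $\overline{C}$ of $C=\{xy:\|x\|,\|y\|\le1\}$ in $A$, being weakly compact, is weak*-compact and hence weak*-closed in $A''$, so both the inner and the outer limits stay in $\overline{C}\subseteq A$. The paper merely compresses this into a citation of the limit formula (\cite{D}\,(2.6.28)) and the remark that it is a variant of \cite{P}\,1.4.13; your only cosmetic blemishes are the unnecessary appeal to Eberlein--\v{S}mulian (nets suffice) and saying ``weakly closed in $A''$'' where ``weak*-closed'' is meant.
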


\begin{proof}
If $C=\{x\,y\!:\,x\in A_0\,,\,y\in A_1\,,\, \| x \|,\| y \|\leq 1\}$, then it
follows immediately from the description of the product by limits
(\cite{D}\,(2.6.28),\,p.\,249) that $\Phi\mybox\Psi\in\overline {C}$ for
$\Phi\in A''_0\,,\,\Psi\in A''_1$ with $\|\Phi\|,\|\Psi\|\leq 1$. \ This is a
variation of a well known result (see \cite{P}\,1.4.13).
\end{proof}

\begin{Pro}	   % Proposition 2
Let $G,\,F_r$ be as in Theorem 3. For \,$A_0=B_{\#,\rho}(F_r),\;A=B_\rho(G)$,
we have \;$A_0\,A\subseteq l ^2(F_r)\subseteq A(F_r)$ \,and
\;$\{ x\,y\!:\,x\in A_0\,,\,y\in A\,,\,\| x \|,\| y \|\leq 1\}$ is relatively
compact in $l ^2(F_r)$.
\end{Pro}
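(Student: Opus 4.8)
The plan is to reduce to the case $G=F_r$ and then to extract everything from the Haagerup inequality on the free group. Throughout, write $S_n=\{g\in F_r:|g|=n\}$ and $N_n=\#S_n$ (so $N_n\ge(2r-1)^n\ge3^n$), and for $x\in B_{\#,\rho}(F_r)$ let $x(n)$ denote its common value on $S_n$. For the reduction: decomposing $\ell^2(G)$ over the right cosets of $F_r$ shows that $C^*_\rho(F_r)$ embeds isometrically into $C^*_\rho(G)$, so the restriction map $B_\rho(G)\to B_\rho(F_r)$ is norm-decreasing. Since an element of $B_{\#,\rho}(F_r)$ is supported on $F_r$, for $x\in B_{\#,\rho}(F_r)$ and $y\in B_\rho(G)$ the product $xy$ is supported on $F_r$ and agrees there with the pointwise product of $x$ and $y|_{F_r}\in B_\rho(F_r)$, where $\|y|_{F_r}\|\le\|y\|$. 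So it suffices to prove the statement with $A=B_\rho(F_r)$.

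The engine is the Haagerup inequality: if $f$ is supported on $S_n$, then $\|\rho(f)\|\le(n+1)\|f\|_2$ (see \cite{FP}). Pairing $\psi\in B_\rho(F_r)$ with such $f$ and taking the supremum over $\|f\|_2\le1$ gives the shellwise bound
\[
\|\psi|_{S_n}\|_2\ \le\ (n+1)\,\|\psi\|\qquad(n\ge0).
\]
On the other hand, for $x\in B_{\#,\rho}(F_r)$ one tests against the radial convolvers $h=\sum_n c_n\chi_{S_n}$, for which $\langle x,\rho(h)\rangle=\sum_n c_nN_n\,x(n)$ while $\|\rho(h)\|\le\sum_n|c_n|(n+1)N_n^{1/2}$ (Haagerup applied shell by shell, then the triangle inequality); choosing $c_n$ proportional to $(n+1)^{s/2}N_n^{-1}\overline{b_n}$ for $b\in\ell^2(\mathbb N)$ and using Cauchy–Schwarz one gets, for every $s\ge0$,
\[
\Bigl(\sum_{n\ge0}(n+1)^{s}|x(n)|^{2}\Bigr)^{1/2}\ \le\ C_s\,\|x\|,\qquad
C_s^{2}:=\sum_{n\ge0}\frac{(n+1)^{s+2}}{N_n}<\infty .
\]
This coefficient decay is the crux of the proof; it may also be read off from the description of $B_{\#,\rho}(F_r)$ via spherical functions in \cite{FP}.

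Now combine the two estimates. Since $x$ is constant $=x(n)$ on $S_n$,
\[
\|xy\|_{\ell^2(F_r)}^{2}=\sum_{n\ge0}|x(n)|^{2}\,\|y|_{S_n}\|_2^{2}\ \le\ \|y\|^{2}\sum_{n\ge0}(n+1)^{2}|x(n)|^{2}\ \le\ C_2^{2}\,\|x\|^{2}\|y\|^{2},
\]
which yields $A_0A\subseteq\ell^2(F_r)\subseteq A(F_r)$ and norm-boundedness of the unit-ball product in $\ell^2(F_r)$. For relative compactness, apply the decay estimate with $s=3$: for $\|x\|,\|y\|\le1$ and any $N$,
\[
\sum_{n>N}\|(xy)|_{S_n}\|_2^{2}\ \le\ \sum_{n>N}(n+1)^{2}|x(n)|^{2}\ \le\ \frac{1}{N+1}\sum_{n\ge0}(n+1)^{3}|x(n)|^{2}\ \le\ \frac{C_3^{2}}{N+1}\longrightarrow0
\]
as $N\to\infty$, uniformly in $x$ and $y$. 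By the Fréchet–Kolmogorov compactness criterion in $\ell^2$, the set $\{xy:\|x\|,\|y\|\le1\}$ is relatively (norm-)compact in $\ell^2(F_r)$.

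The only non-elementary ingredient is the Haagerup inequality; once it is in hand, what remains is the bookkeeping of the reduction step and the elementary convergence of $\sum_n(n+1)^{s}(2r-1)^{-n}$. The one point that needs a little care is that mere summability ($s=2$) only gives the $\ell^2$-bound, whereas the extra exponent ($s=3$) is exactly what upgrades it to relative compactness; so the main thing to get right is to carry enough room in the coefficient estimate.
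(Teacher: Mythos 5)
Your argument is correct, but it travels a genuinely different road from the paper's main proof --- essentially the alternative that the paper itself only sketches in Remark~1. The paper's proof is built on the spherical function estimate: it introduces $\Xi(t)=\sup\{|x(t)|:x\in A_\#(F_r),\,\|x\|\le1\}$, identifies $\Xi=\varphi_{1/2}$ via \cite{FP}\,Ch.\,3 so that $\Xi\in l^3(F_r)$, controls the non-radial factor by averaging ($y\mapsto y^\#$, $\|y^\#\|\le\|y\|$) together with a positive majorant $y_0\in A(G)$ of $|y|$ with $\|y_0\|=\|y\|$, arriving at $\sum_{t\in B_n}|x(t)y(t)|^2\le\sum_{t\in B_n}\Xi(t)^3$, and finally extends from $A(G)$ to $B_\rho(G)$ by the pointwise-closure description of the unit ball from \cite{E}\,p.\,195. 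You instead extract everything from Haagerup's inequality: the shellwise bound $\|y|_{S_n}\|_2\le(n+1)\|y\|$ for arbitrary $y\in B_\rho(F_r)$ (this is exactly the estimate quoted in Remark~1), plus a weighted $\ell^2$-decay $\sum_n(n+1)^s|x(n)|^2\le C_s^2\|x\|^2$ for radial coefficients, which you obtain by testing against radial convolvers rather than from the pointwise bound $|x(t)|\le\varphi_{1/2}(t)$; and you handle the passage from $G$ to $F_r$ by the isometric embedding $C^*_\rho(F_r)\hookrightarrow C^*_\rho(G)$ and norm-decrease of restriction on $B_\rho$, rather than by the majorant-plus-pointwise-density device. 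Both roads give the same uniform tail estimate and hence relative compactness; yours is self-contained modulo Haagerup's inequality and avoids the spherical function asymptotics and the averaging/majorant step, while the paper's version yields the sharper quantitative information ($\Xi\in l^3$, and the $l^{2+\varepsilon}$, $l^{1+\varepsilon}$ inclusions of Remark~1) that connects to the Kunze--Stein phenomenon discussed later. Your closing observation --- that summability with weight $(n+1)^2$ gives only boundedness and one needs the extra power $(n+1)^3$ (or any uniform tail control) for compactness --- is exactly the point the paper makes by using $\Xi\in l^3$ together with the decay of $\sum_{t\in B_n}\Xi(t)^3$.
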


\begin{proof}
For $t\in F_r$\,, we write
\;$\Xi(t)=\sup\{\,|x(t)|\!:\,x\in A_\# (F_r),\,\| x \|\leq 1\}$ \,(this follows
the notation of \cite{M}\,p.\,41).
Then, using \cite{FP}\,Ch.\,3,\;Rem.\,2.4\,and\,Th.\,2.2(ii), we deduce that
\;$\Xi(t)=\varphi_\frac 12(t)=(1+\frac {r-1}{r}|t|)\,(2r-1)^{-\frac {|t|}{2}}$.
In particular, $\Xi\in l^3(F_r)$. Put \linebreak
$B_n=\{t\in F_r\!:\,|t|=n\}$. For $y\in A(G)$ and $t\in F_r\,,\ n=|t|$, we
define
\linebreak
$y^\#(t)=\dfrac1{|B_n|}\sum\limits_{|t'|=n}y(t')$.\vadjust{\vskip .4mm}
Then
$y^\# \in A_\# (F_r),\ \| y ^\# \| \le \| y \|$ \
(\cite{FP}\,Ch.\,3,\,L.\,1.3), hence $|y^\#(t)|\leq \|y\|\;\Xi(t)$\,.
Since there always exists $y_0\in A(G)$
with \,$\| y_0\|=\| y\|$ \,and \linebreak
$|y(t)|\leq y_0(t)$ \,for all $t\in F_r$\,, it follows
that\vadjust{\vskip .4mm}
\,$\sum\limits_{t\in B_{n}}|y(t)|^2\leq\| y\|\sum\limits_{t\in B_{n}}|y(t)|
\leq\|y\|^2\sum\limits_{t\in B_{n}}\Xi(t)$. Then for $x\in A_\# (F_r)$,
assuming $\| x\|,\| y\|\leq 1$, we conclude that\vadjust{\vskip .2mm}
\linebreak
$\sum\limits_{t\in B_{n}}|x(t)\,y(t)|^2\leq\sum\limits_{t\in B_{n}}\Xi (t)^3$.
Since (by
\cite{E}\,p.\,195), the unit ball of $B_{\rho}(G)$ is the closure of the
unit ball of $A(G)$ with respect to pointwise convergence, this estimate
extends to \,$x\in B_{\#,\rho}(F_r)$\,, $y\in B_\rho(G)$.
Using \,$\Xi\in l^3(F_r)$,
it follows that given $\varepsilon >0$ there exists $n_0$ such that
$\bigl\|\,x\,y\,\big |\bigcup\limits_{n\ge n_{0}}B_n\bigr\|_2<\varepsilon$ \;
for all $x\in B_{\#,\rho}(F_r),\:y\in B_\rho(G)$
with $\| x\|,\| y\|\leq 1$. This implies relative compactness in $l^2(F_r)$.
\end{proof}

\begin{Rem}	   % Remark 1
Alternatively, we could have used the result of Haagerup
(see \cite{FP} Ch.\,8,\,L.\,1.1) that
\;$\sum\limits_{t\in B_{n}}|y(t)|^2\leq(n+1)^2\,\| y\|^2$ \;holds for
$y\in B_\rho(F_r)$ (and there is also a converse giving a
characterization of those elements of $B(F_r)$ that belong to
$B_\rho(F_r)$\,). Then the same type of estimate as above implies that 
$$B_{\# ,\rho}(F_r)\subseteq \bigcap_{\varepsilon>0}
l^{2+\varepsilon}(F_r)\quad
\text{and}\quad
B_{\# ,\rho}(F_r)\,B_\rho(G)\subseteq \bigcap_{\varepsilon>0}
l^{1+\varepsilon}(F_r)\ .$$
\end{Rem}

\begin{Rem}	   % Remark 2
The results of \cite{FP} mentioned above express an analogue of the
Kunze--Stein phenomenon. If $G$ is a semisimple Lie group with finite
centre, one has \;
$A(G)\subseteq\bigcap\limits_{\varepsilon>0}L^{2+\varepsilon}(G)$.
This can never happen for a discrete group $G$
having an infinite amenable subgroup $H$, since by an easy
approximation and continuity argument, $A(G)\subseteq l ^p(G)$ would
imply $B_\rho(H)\subseteq l ^p(G)$, but $B_\rho(H)$ contains the
constant functions on $H$. See Remark\,4 for further generalizations.
\end{Rem}

\begin{Lem}	  % Lemma 2
Let $p\!:A(G)''\rightarrow B_\rho(G)$ be the dual of the inclusion
mapping of $C_\rho ^*(G)$ into $\VN(G)$. Then $p$ is a surjective
algebra homomorphism that restricts to the inclusion mapping on $A(G)$.
\end{Lem}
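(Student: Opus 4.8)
The plan is to make $p$ explicit and then verify, in this order, that it restricts to the inclusion $A(G)\hookrightarrow B_\rho(G)$, that it is surjective, and that it is multiplicative; the last point is where the work lies.

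Since $A(G)'=\VN(G)$ and $C^*_\rho(G)'=B_\rho(G)$, the map $p$ is by definition the dual of the inclusion $\iota\colon C^*_\rho(G)\hookrightarrow\VN(G)$, so $\langle p(\Phi),S\rangle=\langle\Phi,\iota(S)\rangle=\langle\Phi,S\rangle$ for $\Phi\in A(G)''=\VN(G)'$ and $S\in C^*_\rho(G)$ (on the right, $S$ is read inside $\VN(G)$). Taking $\Phi=u\in A(G)$ gives $\langle p(u),S\rangle=\langle u,S\rangle$, which — by the compatibility of the $A(G)$–$\VN(G)$ and $B_\rho(G)$–$C^*_\rho(G)$ dualities with the inclusions $A(G)\subseteq B_\rho(G)$, $C^*_\rho(G)\subseteq\VN(G)$ — is just the value of $u$, regarded as a functional in $B_\rho(G)$, at $S$; hence $p|_{A(G)}$ is the inclusion. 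Surjectivity is Hahn–Banach: $\iota$ is isometric (both norms are the operator norm on $\mathcal B(l^2(G))$), so every $\varphi\in C^*_\rho(G)'=B_\rho(G)$ extends to some $\Phi\in\VN(G)'=A(G)''$ with $p(\Phi)=\varphi$; in fact $p$ is a metric surjection.

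For multiplicativity the key point is that the left Arens action of $A(G)''$ on its dual $\VN(G)$ factors through $p$: for $\Phi\in A(G)''$ and $y\in\VN(G)$,
$$\Phi\cdot y \;=\; p(\Phi)\,y\,,$$
the right‑hand side being the pointwise product for the natural $B_\rho(G)$–module (indeed $B(G)$–module) structure on $\VN(G)$. To verify this, pair with $u\in A(G)$: on one side $\langle\Phi\cdot y,u\rangle=\langle\Phi,y\cdot u\rangle$, and here the pointwise product $y\cdot u$ lies in $C^*_\rho(G)$ — one uses $A(G)\cdot\VN(G)\subseteq C^*_\rho(G)$, which follows by approximating $u$ in $A(G)$–norm by finitely supported functions, for which $y\cdot u$ is a finite combination of the operators $\rho(g)$ — so $\langle\Phi,y\cdot u\rangle=\langle p(\Phi),y\cdot u\rangle$; on the other side $\langle p(\Phi)\,y,u\rangle=\langle y,u\,p(\Phi)\rangle$ with $u\,p(\Phi)\in A(G)$, since $A(G)$ is an ideal in $B(G)$, and the two expressions coincide by the module identity $\langle v,w\cdot T\rangle=\langle vw,T\rangle$ (for $v,w\in B_\rho(G)$, $T\in C^*_\rho(G)$) together with the compatibility of the pairings. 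Granting this identity, for $\Phi,\Psi\in A(G)''$ and $S\in C^*_\rho(G)$,
\begin{align*}
\langle p(\Phi\mybox\Psi),S\rangle
 &=\langle\Phi\mybox\Psi,S\rangle=\langle\Phi,\Psi\cdot S\rangle=\langle\Phi,p(\Psi)\,S\rangle\\
 &=\langle p(\Phi),p(\Psi)\,S\rangle=\langle p(\Phi)\,p(\Psi),S\rangle,
\end{align*}
using again that $p(\Psi)\,S\in C^*_\rho(G)$ (a $B_\rho(G)$–submodule of $\VN(G)$) and the module identity; hence $p(\Phi\mybox\Psi)=p(\Phi)\,p(\Psi)$.

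The main obstacle is precisely the identity $\Phi\cdot y=p(\Phi)\,y$: it is not deep, but it rests on the facts $A(G)\cdot\VN(G)\subseteq C^*_\rho(G)$ and $A(G)\cdot B(G)\subseteq A(G)$ and on carefully matching up the several module structures and dualities involved (for multiplicativity alone the weaker $A(G)\cdot C^*_\rho(G)\subseteq C^*_\rho(G)$ already suffices). An alternative, more structural route: with $j\colon A(G)\hookrightarrow B_\rho(G)$ the inclusion, one has $p=\pi\circ j''$, where $j''\colon A(G)''\to B_\rho(G)''$ is the bidual map — an algebra homomorphism for the first Arens products because $j$ is — and $\pi\colon B_\rho(G)''\to B_\rho(G)$ is the canonical projection; and $\pi$ is an algebra homomorphism because $B_\rho(G)=C^*_\rho(G)'$ is a dual Banach algebra (multiplication by a fixed element of $B_\rho(G)$ is the dual of a module map on $C^*_\rho(G)$, hence weak*–continuous), so that the annihilator in $B_\rho(G)''$ of $C^*_\rho(G)\subseteq B_\rho(G)'$ is a weak*–closed two-sided ideal with quotient $B_\rho(G)$.
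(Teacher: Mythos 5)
Your proof is correct and is essentially the paper's argument written out in full: the paper's proof is just ``this follows easily from the definition of the Arens product'' (with a reference to Lau, Prop.~5.3), and your verification is precisely that unfolding, with the key identity $\Phi\cdot y=p(\Phi)\,y$ being the one the paper itself records in the remark following Lemma~1. The Hahn--Banach surjectivity step and the compatibility facts $A(G)\,\VN(G)\subseteq C_\rho^*(G)$, $B_\rho(G)\,C_\rho^*(G)\subseteq C_\rho^*(G)$ that you invoke are all standard and are used elsewhere in the paper, so there is no gap.
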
\vspace{-2mm}
\noindent Here $G$ might be any locally compact group.

\begin{proof}
This follows easily from the definition of the Arens product. See also
\linebreak \cite{L}\,Prop.\,5.3.
\end{proof}

\begin{proof}[Proof of Theorem 3]
Recall that for a discrete group $G$\,, we have the continuous embedding
\;$l^2(G)\subseteq A(G)$\,. Thus, it follows from the Propositions\;1\,and\,2
that \linebreak
$A_\#(F_r)''\,\mybox\, A(G)''\subseteq A(G)$ \,and
\;$A(G)''\,\mybox\, A_\#(F_r)''\subseteq A(G)$\,. \,Then for
\,$\Phi\!\in A_\#(F_r)'',\linebreak\Psi\in A(G)''$,
we get from Lemma\,2 (using that $B_\rho(G)$ is commutative)
$$\Phi\mybox\Psi=p(\Phi\mybox\Psi)=p(\Phi)\,p(\Psi)=p(\Psi)\,p(\Phi)=
p(\Psi\mybox\Phi)=\Psi\mybox\Phi\;.$$
(In particular, this gives another argument for the inclusion
\;$B_{\#,\rho}(F_r)\,B_\rho(F_r)\subseteq A(F_r)$\,).
\end{proof}
\vskip 5mm
\begin{proof}[Proof of Theorem 2]
For $G=F_r$\,, we see from Theorem\,3 that $A_\#(F_r)''$ is commutative
and the proof gives that $A_\#(F_r)''\mybox A_\#(F_r)''\subseteq
A(F_r)$. Then Lemma\,2 (and using that the algebra $B_{\#,\rho}(F_r)$
is faithful) implies \;$\ker p\cap A_\#(F_r)''=J$\,. \ Similarly, one shows
the result for $B_{\#,\rho}(F_r)''$.
\end{proof}

\begin{Cor}	    % Corollary 1
If $G$ is a discrete group containing $F_r$\,, where $r\geq 2$ is
finite, \linebreak
$J\;(=J(F_r)\,)$ is the annihilator of $A_\#(F_r)''$, then $J$ is
contained in the (left and right) annihilator of $A(G)''$. In
particular, the subspace generated by \,$B_\rho(G)\,\VN(G)$ is not
norm--dense in $\VN(G)$.
\end{Cor}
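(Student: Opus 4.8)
The plan is to deduce Corollary 1 directly from Theorems 2 and 3, Lemma 1, and the description of the action of $A(G)''$ on $\VN(G)$ given after Lemma 1; no new estimate is needed. I would first collect what the earlier results provide: by Lemma 3, $p\colon A(G)''\to B_\rho(G)$ is a surjective algebra homomorphism restricting to the injective inclusion $A(G)\hookrightarrow B_\rho(G)$; by Propositions 1 and 2 (as used in the proof of Theorem 3), $A_\#(F_r)''\mybox A(G)''\subseteq A(G)$ and $A(G)''\mybox A_\#(F_r)''\subseteq A(G)$; and, by naturality of $p$ with respect to the inclusion $F_r\subseteq G$, the restriction $p|_{A_\#(F_r)''}$ is, up to the inclusion $B_{\#,\rho}(F_r)\hookrightarrow B_\rho(G)$, the homomorphism of Theorem 2, so it has image $B_{\#,\rho}(F_r)$ and kernel $J$.

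The first assertion is then immediate. Let $\Phi\in J$, so $p(\Phi)=0$. For any $\Psi\in A(G)''$ we get $p(\Phi\mybox\Psi)=p(\Phi)\,p(\Psi)=0$ and $p(\Psi\mybox\Phi)=p(\Psi)\,p(\Phi)=0$; since $\Phi\mybox\Psi$ and $\Psi\mybox\Phi$ lie in $A(G)$, on which $p$ is injective, $\Phi\mybox\Psi=\Psi\mybox\Phi=0$. Hence $J$ lies in the left and in the right annihilator of $A(G)''$.

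For the ``in particular'' clause I would first record that $J\neq(0)$ -- the one point needing an argument not already packaged in the quoted results. Under the identification $A_\#(F_r)''=\VN_\#(F_r)'$, the kernel $J$ of $p|_{A_\#(F_r)''}$ is the annihilator of $C^*_{\#,\rho}(F_r)$ in $\VN_\#(F_r)'$; since $C^*_{\#,\rho}(F_r)$ is a proper norm-closed subspace of $\VN_\#(F_r)$ (equivalently, $A_\#(F_r)$ is not reflexive), Hahn--Banach gives $J\neq(0)$. Now pick $\Phi\in J\setminus\{0\}$. By the first part $\Phi\mybox A(G)''=(0)$, so by Lemma 1(i) $\Phi$ vanishes on $A(G)''\cdot\VN(G)$; and since $\Psi\cdot y=p(\Psi)\,y$ for $\Psi\in A(G)''$, $y\in\VN(G)$ (pointwise multiplication) and $p$ is onto $B_\rho(G)$, we have $A(G)''\cdot\VN(G)=B_\rho(G)\,\VN(G)$. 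Thus $\Phi$ is a nonzero element of $\VN(G)'=A(G)''$ annihilating the subspace generated by $B_\rho(G)\,\VN(G)$, so that subspace cannot be norm-dense in $\VN(G)$.

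So the corollary is essentially a bookkeeping exercise on top of the machinery of Theorems 2 and 3. The only genuine things to check are the compatibility of the map $p$ of Lemma 3 with the subgroup inclusion (which makes ``$\ker p\cap A_\#(F_r)''=J$'' pass from the case $G=F_r$ of Theorem 2 to arbitrary $G\supseteq F_r$) and the fact that $J\neq(0)$ via $C^*_{\#,\rho}(F_r)\subsetneq\VN_\#(F_r)$; I expect the former -- tracking the naturality of the Arens-theoretic map $p$ -- to be the only place demanding a little care.
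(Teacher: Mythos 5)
Your proof is correct and follows essentially the same route as the paper: show $J\subseteq\ker p$, deduce $J\mybox A(G)''=A(G)''\mybox J=(0)$ from the product formula $\Phi\mybox\Psi=p(\Phi)\,p(\Psi)$ established in the proof of Theorem 3, and then combine Lemma 1(i) with the identity $\Psi\cdot y=p(\Psi)\,y$ to get the non-density of $B_\rho(G)\,\VN(G)$. You are somewhat more explicit than the paper in verifying $J\neq(0)$ (via $C^*_{\#,\rho}(F_r)\subsetneq\VN_\#(F_r)$) and the naturality of $p$ under the subgroup inclusion, both of which the paper leaves implicit; note only that the map $p$ is introduced in the paper's Lemma 2, not Lemma 3.
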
\vspace{-2mm}
\noindent This gives counterexamples to the problem in \cite{LL}\,Rem.\,6.6.

\begin{proof}
If \,$p\!:A(G)''\rightarrow B_\rho(G)$ is defined as in Lemma\,2, it
follows again that\linebreak $J\subseteq \ker p$\,, hence $J\mybox A(G)''=(0)$.
Now use Lemma\,1 and the remark after its proof.
\end{proof}

\begin{Cor}	   % Corollary 2
Let $G,F_r$ be as in Corollary 1, $E\!:\VN(G)\rightarrow \VN_\#(F_r)$
denotes the conditional expectation. \
Then \ \ $B_{\#,\rho}(F_r)\,\VN(G)\subseteq C_\rho ^*(G)$ \quad
and \\ $E(\,B_\rho(G)\,\VN(G)\,)\subseteq C_\rho ^*(G)$\,.
\end{Cor}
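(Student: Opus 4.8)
The plan is to deduce both inclusions from one layer--wise estimate on $F_r$, together with Haagerup's inequality (Remark\,1, \cite{FP}\,Ch.\,8,\,L.\,1.1). For a function $f$ on $G$ supported by $F_r$, write $f_n$ for the restriction of $f$ to the sphere $B_n=\{t\in F_r:|t|=n\}$; the $B_n$ are finite, $f=\sum_n f_n$ pointwise, and the operator form of Haagerup's inequality (dual to the coefficient form quoted in Remark\,1) gives $\|f_n\|_{C^*_\rho(F_r)}\le(n+1)\,\|f_n\|_2$ for $f_n$ supported by $B_n$; applied to the finite partial sums $\sum_{t\in F}f_n(t)\,\rho(t)$ it also shows that $f_n$ genuinely defines an element of $C^*_\rho(F_r)$. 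Hence, once $\sum_n(n+1)\,\|f_n\|_2<\infty$ is known, the series $\sum_n f_n$ converges in operator norm to an element of $C^*_\rho(F_r)\subseteq C^*_\rho(G)$ representing $f$, so $f\in C^*_\rho(G)$. It remains to verify this summability for the two products.

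For $B_{\#,\rho}(F_r)\,\VN(G)$: since $\VN(G)$ embeds contractively into $l^2(G)\subseteq A(G)\subseteq B_\rho(G)$, the estimate obtained in the proof of Proposition\,2 applies to the pointwise product $x\,y$ with $x\in B_{\#,\rho}(F_r),\ y\in\VN(G)$ and gives $\sum_{t\in B_n}|x(t)\,y(t)|^2\le\|x\|^2\|y\|^2\sum_{t\in B_n}\Xi(t)^3=\|x\|^2\|y\|^2\,|B_n|\,\Xi_n^3$, where $\Xi_n$ is the constant value of $\Xi$ on $B_n$. Substituting $|B_n|=2r(2r-1)^{n-1}$ and $\Xi_n=(1+\tfrac{r-1}{r}n)(2r-1)^{-n/2}$ one gets $\|(x\,y)_n\|_2\le C\,\|x\|\,\|y\|\,n^{3/2}(2r-1)^{-n/4}$, which is summable against $(n+1)$ since $2r-1\ge3$; thus $x\,y\in C^*_\rho(G)$.

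For $E(B_\rho(G)\,\VN(G))$: fix $b\in B_\rho(G),\ z\in\VN(G)$ and put $w=b\,z\in\VN(G)$. Then $E(w)=(w|_{F_r})^\#$ is supported by $F_r$ and, for $t\in B_n$, $E(w)(t)=|B_n|^{-1}\sum_{t'\in B_n}b(t')\,z(t')$; by Cauchy--Schwarz, $|E(w)(t)|\le|B_n|^{-1}\bigl(\sum_{t'\in B_n}|b(t')|^2\bigr)^{1/2}\bigl(\sum_{t'\in B_n}|z(t')|^2\bigr)^{1/2}$. The restriction $b|_{F_r}$ lies in $B_\rho(F_r)$ with $\|b|_{F_r}\|\le\|b\|$, so $\sum_{t'\in B_n}|b(t')|^2\le(n+1)^2\|b\|^2$ by Remark\,1; writing $c_n=\sum_{t'\in B_n}|z(t')|^2$ we have $\sum_n c_n\le\sum_{t'\in G}|z(t')|^2\le\|z\|^2$, using $\VN(G)\subseteq l^2(G)$. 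As $E(w)$ is radial, $\|E(w)_n\|_2=|B_n|^{1/2}\,|E(w)(t)|\le(n+1)\,\|b\|\,|B_n|^{-1/2}\,c_n^{1/2}$, hence by Cauchy--Schwarz in $n$, $\sum_n(n+1)\,\|E(w)_n\|_2\le\|b\|\bigl(\sum_n(n+1)^4|B_n|^{-1}\bigr)^{1/2}\bigl(\sum_n c_n\bigr)^{1/2}<\infty$, since $|B_n|$ grows geometrically. Therefore $E(w)\in C^*_\rho(F_r)$, and being radial it belongs to $C^*_{\#,\rho}(F_r)\subseteq C^*_\rho(G)$.

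The one delicate point is the interplay of the two forms of Haagerup's inequality: one needs the layer norms $\|f_n\|_2$ to decay \emph{geometrically} --- supplied by the explicit shape of $\Xi$ in the first case and by the averaging factor $|B_n|^{-1}$ coming from the conditional expectation in the second --- so that the linear factor $(n+1)$ in $\|f_n\|_{C^*_\rho}\le(n+1)\,\|f_n\|_2$ does no harm. In particular the weaker inclusion $B_{\#,\rho}(F_r)\,B_\rho(G)\subseteq\bigcap_{\varepsilon>0}l^{1+\varepsilon}(F_r)$ from Remark\,1, or relative $l^2(F_r)$--compactness by itself, would not be enough.
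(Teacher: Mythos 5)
Your proof is correct, but it takes a genuinely different route from the paper. The paper disposes of both inclusions by pure duality: the first is the predual formulation of \,$\ker p\mybox A_\#(F_r)''=(0)$\, (which is already contained in the proof of Theorem\,3, since $A(G)''\mybox A_\#(F_r)''\subseteq A(G)$ and $\Phi\mybox\Psi=p(\Phi)\,p(\Psi)$), and the second is the predual formulation of \,$\ker p_\#\mybox A(G)''=(0)$, after observing that $E$ is the dual of the inclusion $C^*_{\#,\rho}(F_r)\hookrightarrow C^*_\rho(G)$ and that $\ker p\cap A_\#(F_r)''=\ker p_\#$. You instead prove membership in $C^*_\rho(G)$ directly, by decomposing along the spheres $B_n$ and showing that $\sum_n(n+1)\,\|f_n\|_2<\infty$ forces norm convergence of the spherical Fourier series, via the operator form of Haagerup's inequality. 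Your estimates all check out: the contractions $\VN(G)\subseteq l^2(G)\subseteq A(G)\subseteq B_\rho(G)$ and the restriction $B_\rho(G)|_{F_r}\subseteq B_\rho(F_r)$ are legitimate, the bound $|B_n|\,\Xi_n^3\sim C\,n^3(2r-1)^{-n/2}$ is what Proposition\,2's computation gives, and the limit operator agrees with the pointwise product (resp.\ with $E(w)$) because elements of $\VN(G)$ are determined by their coefficient functions. The trade-off: the paper's argument is two lines once Theorems\,2 and 3 are in place and needs no operator-norm estimate at all (Proposition\,1's weak compactness plus the homomorphism $p$ do that work), whereas your argument requires the additional ingredient $\|f_n\|_{C^*_\rho}\le(n+1)\|f_n\|_2$ --- correctly obtained by dualizing the coefficient inequality of Remark\,1 --- but in return is self-contained, quantitative, and yields the stronger conclusions that the products lie in $C^*_\rho(F_r)$ (resp.\ $C^*_{\#,\rho}(F_r)$) with norm-convergent sphere decompositions. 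Your closing observation that geometric decay of the layer norms is essential, and that relative $l^2$-compactness or the $l^{1+\varepsilon}$ inclusions alone would not suffice for a direct proof, is accurate and explains precisely why the paper routes this corollary through duality rather than through estimates.
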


\begin{proof}
By duality, the first inclusion is equivalent to \;
$\ker p\mybox A_\#(F_r)''=(0)$. For the second part, note that $E$ is the
dual mapping of the inclusion of $C_{\#,\rho}^*(F_r)$ into $C_\rho ^*(G)$.
By an easy argument, $\ker p\cap A_\#(F_r)''=\ker p_\#$\,, where $p_\#$
denotes the dual mapping of the inclusion of $C_{\#,\rho}^*(F_r)$
into $\VN_\#(F_r)$. Again by duality, the second inclusion is
equivalent to \;$\ker p_\#\mybox A(G)''=(0)$.
\end{proof}

\begin{Rem}	    % Remark 3
The Arens product on $A_\#(F_r)''$ has a rather simple description:\\
$\Phi\mybox\Psi=p(\Phi)\,p(\Psi)=p_\#(\Phi)\,p_\#(\Psi)$ \;(where $p$ is
defined in Lemma\,2, $p_\#$ in the proof of Corollary\,2). Thus the
product depends only on the corresponding elements of
$B_{\#,\rho}(F_r)$, the algebra $A_\#(F_r)''$ is an extension of
$B_{\#,\rho}(F_r)$
by the trivial algebra $J\;(=\ker p_\#)$, an annihilator extension
(\cite{D}\,Def.\,1.9.4).
\\
It is a natural question to ask if this extension splits, i.e., if
there is an isomorphic copy of $B_{\#,\rho}(F_r)$ in $A_\#(F_r)''$. We have
$C_{\#,\rho}^*(F_r)\cong C([0,\pi])$, 
$\VN_\#(F_r)\cong L^\infty([0,\pi]),\ A_\#(F_r)\cong L ^1([0,\pi]),\
B_{\#,\rho}(F_r)\cong M([0,\pi])$ \;(see Remark\,4 below). Thus the question
amounts to get a
copy of the space of measures $M([0,\pi])$ in $L^1([0,\pi])''$.
\\
Abstractly, this can be achieved by representing $M([0,\pi])$ as an
$l^1$--sum of spaces $L ^1([0,\pi],\mu_i)$, where $(\mu_i)$ denotes a
maximal family of pairwise singular probability measures in $M([0,\pi])$.
Then one can use Hahn--Banach extensions of the functionals $\mu_i$ on
$C([0,\pi])$ to functionals on $L ^\infty([0,\pi])$. Alternatively, one
can use a Borel lifting on $[0,\pi]$ to get such an extension. But it is
known from results of Shelah that the existence of a Borel lifting
cannot be shown in standard set theory (i.e., one needs additional
assumptions like continuum hypothesis). Thus, although a splitting
exists for this extension, there does not seem to be a ``natural"
construction for such a splitting. Of course, there is the related
question to find an isomorphic copy of $B_\rho(G)$ in $A(G)''$.
Another method to construct a splitting (under appropriate assumptions on
$G$) is given in the proof of Proposition\,3 below.
\\
The structure of  $B_{\#,\rho}(F_r)''$ can be described similarly.
Since $B_{\#,\rho}(F_r)\cong C_{\#,\rho}^*(F_r)'$, we can define
\,$p_1\!:B_{\#,\rho}(F_r)''\to B_{\#,\rho}(F_r)$ as the dual of the
canonical embedding of $C_{\#,\rho}^*(F_r)$ into its bidual
$B_{\#,\rho}(F_r)'$. Thus we have an annihilator extension of
$B_{\#,\rho}(F_r)$ by \,$\ker p_1$ \,with a natural splitting.
Furthermore, in the situation of Theorem\,3,
$B_{\#,\rho}(F_r)''\subseteq\mathfrak Z_t (B_\rho(G)'')$\,. 
\end{Rem}

\begin{Rem}	     % Remark 4
$A(G)$ \;($G$ discrete) and its closed subalgebras belong to the class
of Banach algebras investigated in \cite{U}. The same for $B_{\#,\rho}(F_r)$
\,(using Proposition\,2 for complete continuity).
In particular, $A_\#(F_r)$ and $B_{\#,\rho}(F_r)$
($r\geq 2$, finite) satisfy the properties of \cite{U}\,Th.\,3.3,
characterizing Arens regularity, providing a further example for this
theorem. \!\cite{U} mentioned the example $l^1$ with pointwise
multiplication; the Arens product in this case was already investigated in
the classical paper of Arens and in that of Civin and Yood, see
\cite{P}\,p.\,58. For $l^1$ there is a corresponding decomposition of the
bidual, the r\^ole of $B_{\#,\rho}(F_r)$ is taken by $l^1$ and this
immediately gives a natural splitting for $(l^1)''$ (this is the direct
analogue of the case $B_{\#,\rho}(F_r)$\,).
\\
Since $A(G)$ contains all finitely supported
functions (as a dense subset), it is easy to see that the characters
of $A_\#(F_r)$ are just the evaluation functionals at the points of
$F_r$ (where $|t|=|t'|$ implies that $t,t'$ define the same character).
Thus the character space (Gelfand spectrum) $\Phi_{A_{\#}(F_r)}$ of
$A_\#(F_r)$ can be identified with $\N_0=\{0,1,2,\ldots\}$ (discrete
topology). Since $\varphi_{\frac {1} {2}}(t)\rightarrow 0$ for
$|t|\rightarrow\infty$, the closure $\Phi_{A_\#(F_{r})}\cup\{0\}$ of the
spectrum is even norm--compact in the dual $A_\#(F_{r})'$ \;(compare
\cite{U}\,Cor.\,3.2). Since
$B_{\#,\rho}(F_r)\,B_{\#,\rho}(F_r)\subseteq A_\#(F_{r})$, we have
$\Phi_{A_\#(F_r)}=\Phi_{B_{\#,\rho}(F_r)}$\,.
\\[2mm plus 1mm]
More explicitly, the ``spherical Fourier transform" defines an isomorphism
of the $C^*$--algebras $C_{\#,\rho}^*(F_r)$ and $C([0,\pi])$ \
(\cite{FP}\,Ch.\,3,\,Th.\,3.3, see also Ch.\,3,\,Sec.\,IV; in \cite{FP} they
use $z=\frac12+i\,t$ with $0\le t\le \frac{\pi}{\ln (2r-1)}$ for
parametrization, here we use instead $\theta=t\,\ln (2r-1)$\,).
The Plancherel theorem (\cite{FP}\,Ch.\,3,\,Th.\,4.1) implies that
$\VN_\#(F_r)\cong L^\infty([0,\pi])$ \;(as a $W^*$--algebra), \,
$A_\#(F_r)\cong L ^1([0,\pi],m),\
B_{\#,\rho}(F_r)\cong M([0,\pi])$. Multiplication in $A_\#(F_r)$
corresponds to a
generalized convolution of functions (and measures in the case of
$B_{\#,\rho}(F_r)$\,) on $[0,\pi]$.\vspace{.3mm}
\,For simplicity, we restrict to $r=2$. Then the Plancherel measure is given
by\vspace{.5mm} \;$dm=\dfrac6{\pi\,(4+\cot^2\theta)}\,d\theta$ and the
formula for convolution is \
$\delta_{\theta_1}\ast \delta_{\theta_2}=
s(\theta_1,\theta_2,\theta_3)\,dm(\theta_3)$ \;with\vspace{-2mm}
$$s(\theta_1,\theta_2,\theta_3)=\,
\dfrac{\prod\limits_{j=1}^3\ \bigl(4-3\cos^2(\theta_j)\bigr)}
{6\prod\limits_{\epsilon_2,\epsilon_3=\pm1}
\bigl(2-\sqrt 3\, \cos(\theta_1+\epsilon_2\theta_2+\epsilon_3\theta_3)\bigr)}
$$
Here one can see more directly the ``smoothing effect" of multiplication in
$B_{\#,\rho}(F_r)$ and the compactness statement of Proposition\,2 follows
for radial functions from equicontinuity of the densities
$\theta_3\mapsto s(\theta_1,\theta_2,\theta_3)$ \,(with respect to
$\theta_1,\theta_2$).
\\[0mm plus1mm]
The picture changes when considering the radial Fourier-Stieltjes algebra
$B_\#(F_r)$. One has to enlarge the parameter space by two further (complex)
segments. Put
$J_1=[0,\pi]\cup \{\,i\zeta\,,\,\pi+i\zeta:0\le\zeta\le\frac{\ln(2r-1)}2\,\}$.
Then (\cite{FP}\,Ch.\,3,\,L.\,3.2) we get \ $C_\#^*(F_r)\cong C(J_1)$ (as
$C^*$--algebras) and $B_\#(F_r)\cong M(J_1)$. For \;
$\Im(\theta_1+\theta_2)>\frac{\ln(3)}2$ (again restricting to $r=2$)
an additional atomic part
$A\,\delta_{\theta'}$ appears in the formula for convolution, where
$\theta'=\theta_1+\theta_2-i\,\frac{\ln(3)}2 \mod 2\pi$ \;and
(putting $\nu_j=e^{2\,\Im \theta_j}$)
$A= \dfrac{(3\nu_1-1)(3\nu_2-1)(\nu_1\nu_2-3)}
{12(\nu_1-1)(\nu_2-1)(\nu_1\nu_2-1)}$ \ \vspace{.7mm}
(the expression for the absolutely
continuous part remains the same). Then it is easy to see that $B_\#(F_2)$
is not Arens regular.  More generally, $B_\#(F_r)$ and
$B_\#(F_r)/B_{\#,\rho}(F_r)$ are not Arens regular for any $r\ge 2$\,.
\\[3mm]
In \cite{Mu} the notion of Fourier space $A(H)$ is defined for an arbitrary
locally compact hypergroup~$H$ having a left Haar measure. Then $A_\#(F_r)$
coincides with the Fourier space for the coresponding Sawyer-Voit hypergroup
on $\N_0$ (\cite{BH}\,p.\,183). The argument used in the proof of
Proposition\,2 carries over, if $H$ is any discrete commutative hypergroup
satisfying the {\it Kunze-Stein property} of order $p$ for some $p>\frac43$
(\cite{BH}\,Def.\,2.5.5); equivalently, if $A(H)\subseteq l^q$ for some
$q<4$. General duality theory then implies that in this case $A(H)$ is a
Banach algebra under its standard norm. It follows that $A(H)$ is Arens
regular and the Arens product can be described as in Remark\,3.
\\
In the case of free groups formulas as above go back to Letac (see \cite{V}
for further references). In the literature, this is called a dual convolution
structure (arising on the dual object of a hypergroup). Further results
on multipliers have been shown in \cite{HSS}.
In \cite{V}, detailed computations have been given for the (more general)
Cartier hypergroups (\cite{BH}\,p.\,176), depending on real
parameters $a,b\ge2$ (the case of free groups is $b=2,\;a=2r$). For $a+b>4$,
they satisfy the Kunze-Stein property
for all $p<2$, using the formulas on \cite{V}\,p.\,339. Hence this gives
further explicit examples of Arens regular 
Banach algebras $A(H)$ of the type considered in \cite{U}. But observe
that it follows from the product formulas in \cite{V} that for $a,b>2$
the corresponding Fourier-Stieltjes spaces are no longer algebras under
pointwise multiplication.
\\[1mm]
See also \cite{M} for non-discrete examples coming from spherical functions
on Lie groups.
\end{Rem}

\begin{Rem}	     % Remark 5
For $G,F_r$ as in Theorem\,3, it follows that
\;$p\big(\,\mathfrak Z_t(A(G)'')\big)\supseteq B_{\#,\rho}(F_r)$.
This exhibits another
feature in the non--amenable case. In \cite{LL}\,Th.\,6.4 an essential step in
the proof was to show that \,$p\big(\mathfrak Z_t(A(G)'')\big)=A(G)$ for $G$
amenable (where in the non--discrete case $p$ denotes the dual of the
inclusion mapping of $\mathit{UCB}(\widehat G)$ into $\VN(G)$\,).
\end{Rem}

\begin{Pro}	   % Proposition 3
Assume that $G$ is a discrete group and that $A(G)$ has an approximate
identity which is bounded in the
multiplier norm. $p$ is defined as in Lemma~2. Then \vspace*{-2mm}
$$p\big(\,\mathfrak Z_t(A(G)'')\,\big)=\{v\in B_\rho(G)\!:
\;L_v\!:A(G)\to A(G) \text{ is weakly compact\,}\}\,.$$
\end{Pro}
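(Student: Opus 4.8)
The plan is to prove the two inclusions separately, using the multiplier-bounded approximate identity to move between $A(G)$ and $B_\rho(G)$ and to identify the weak topology on $\VN(G)$ with the weak* topology coming from $A(G)$.

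First I would establish the inclusion $p\big(\mathfrak Z_t(A(G)'')\big)\subseteq\{v\in B_\rho(G):L_v\text{ weakly compact}\}$. Take $\Phi\in\mathfrak Z_t(A(G)'')$ and set $v=p(\Phi)\in B_\rho(G)$. Weak*--continuity of $\Psi\mapsto\Phi\mybox\Psi$ means precisely that the map $\VN(G)=A(G)'\to A(G)'=\VN(G)$ given by $y\mapsto\Phi\cdot y$ is weak*--continuous; by the remark after Lemma~2 this map is $y\mapsto v\,y$ (pointwise multiplication), which is the adjoint of $L_v:A(G)\to A(G)$. A bounded operator on a Banach space is weakly compact if and only if its adjoint is weak*--to--weak continuous on the dual (Gantmacher, \cite{D}\,(2.9.x)); but here I need the stronger-looking hypothesis of weak*--to--weak* continuity of the adjoint to yield weak compactness of $L_v$. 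This is where the bounded approximate identity enters: since $A(G)$ has a multiplier--bounded approximate identity, $A(G)$ is an ideal in its bidual in the relevant sense, and one gets that $L_v$ actually maps into $A(G)$ (not merely $B_\rho(G)$) with $L_v^{\ast}=$ multiplication by $v$ on $\VN(G)$; then weak*--to--weak* continuity of $L_v^{\ast\ast}$ combined with the approximate identity forces $L_v$ to be weakly compact by a Grothendieck-type argument.

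Conversely, given $v\in B_\rho(G)$ with $L_v:A(G)\to A(G)$ weakly compact, I would construct $\Phi\in\mathfrak Z_t(A(G)'')$ with $p(\Phi)=v$. The weak compactness of $L_v$ gives, via Gantmacher, that $L_v^{\ast\ast}(A(G)'')\subseteq A(G)$; here $L_v^{\ast\ast}$ is right multiplication structure on the bidual. Using the multiplier-bounded approximate identity $(e_\alpha)$ in $A(G)$, pass to a weak* cluster point $\Phi$ of $(v\,e_\alpha)$ in $A(G)''$; since $(e_\alpha)$ is bounded in multiplier norm and $L_v$ is weakly compact, $v\,e_\alpha$ stays in a weakly compact subset of $A(G)$, so in fact $\Phi$ lies in $A(G)$ — no, more carefully, one checks $p(\Phi)=v$ directly from $\langle p(\Phi),u^\ast\rangle=\lim\langle v\,e_\alpha,u^\ast\rangle$ for $u^\ast\in C^\ast_\rho(G)$, using that $e_\alpha\to1$ pointwise. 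Then for the topological centre condition, for any $\Psi\in A(G)''$ and $y\in\VN(G)$ one computes $\Phi\cdot y=\lim_\alpha(v\,e_\alpha)\cdot y$; weak compactness of $L_v$ ensures this limit is taken in a weakly compact set, giving weak*--continuity of $y\mapsto\Phi\cdot y$, i.e.\ $\Phi\in\mathfrak Z_t(A(G)'')$.

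The main obstacle will be the careful bookkeeping around the multiplier-bounded approximate identity: one must use it to upgrade membership in $B_\rho(G)$ to the statement that $L_v$ is a bona fide operator $A(G)\to A(G)$ whose second adjoint behaves well, and to guarantee that weak* cluster points of $(v\,e_\alpha)$ exist and compute $v$ under $p$. The equivalence between "$L_v$ weakly compact" and "adjoint weak*--weak continuous" is Gantmacher and is routine, but the passage between the weak* topology on $A(G)''$ and the various weak topologies on $\VN(G)=A(G)'$ must be done with some care, as must the verification that the construction of $\Phi$ does not depend on the choice of cluster point for the purpose of the centre condition (it does depend for the value of $\Phi$ itself, but any choice works). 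I would also note that this Proposition recovers the amenable case (Theorem~1) when combined with the fact that for amenable $G$ the only $v$ with $L_v$ weakly compact are those in $A(G)$, and in the non-amenable free-group situation it recovers the lower bound $p(\mathfrak Z_t(A(G)''))\supseteq B_{\#,\rho}(F_r)$ from Remark~5 via Proposition~2.
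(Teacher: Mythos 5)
Your skeleton (Gantmacher's theorem plus a weak* cluster point of $(v\,u_\alpha)$) points in the right direction, but both halves stall at the same place: you never supply a workable criterion for membership in $\mathfrak Z_t(A(G)'')$. The paper's proof rests on \"Ulger's characterization (\cite{U}\,Th.\,2.2), applicable because $A(G)$ is an ideal in $A(G)''$ for discrete $G$ (this uses compactness of $L_v$ for $v\in A(G)$, not the approximate identity): $\Phi\in\mathfrak Z_t(A(G)'')$ if and only if $\Phi\mybox A(G)''\subseteq A(G)$ and $A(G)''\mybox\Phi\subseteq A(G)$. In the forward direction the paper writes $u\mybox\Phi=u\,p(\Phi)=L_v(u)$ for $u\in A(G)$ and uses weak*--continuity of $\mybox$ in the \emph{first} variable to get $\Psi\mybox\Phi=L_v''(\Psi)$; \"Ulger's criterion then yields $L_v''(A(G)'')\subseteq A(G)$, which \emph{is} Gantmacher's condition for weak compactness of $L_v$. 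Your route --- trying to read weak compactness off a continuity property of $y\mapsto\Phi\cdot y$ on $\VN(G)$ --- breaks down exactly where you admit it does: weak*--to--weak* continuity of an adjoint is automatic and gives nothing, and your ``Grothendieck-type argument'' is a placeholder rather than an argument.

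The same gap recurs in the converse direction: your final step, that the cluster-point construction gives ``weak*--continuity of $y\mapsto\Phi\cdot y$, i.e.\ $\Phi\in\mathfrak Z_t(A(G)'')$'', both misidentifies the centre condition (which concerns weak*--continuity of $\Psi\mapsto\Phi\mybox\Psi$ on $A(G)''$, not of a map on $\VN(G)$) and does not verify it. The paper instead sets $\Phi_v=\bar\Phi\cdot v$, where $\bar\Phi$ is a weak* cluster point of $(L_{u_\alpha})$ in $M_\ell(A(G))''$ --- essentially your cluster point of $(v\,u_\alpha)$, but packaged as a module action so that the computation $\Phi_v\mybox\Psi=\bar\Phi\cdot(v\mybox\Psi)=v\mybox\Psi=L_v''(\Psi)\in A(G)$ goes through --- and then checks \emph{both} inclusions $\Phi_v\mybox A(G)''\subseteq A(G)$ and $A(G)''\mybox\Phi_v\subseteq A(G)$ before invoking \"Ulger again. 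Your verification of $p(\Phi)=v$ via pointwise convergence of the approximate identity is fine, and the multiplier-norm bound (through $M_\ell(B_\rho(G))\cong M_\ell(A(G))$) is indeed what makes the cluster point exist; but without the two-sided ideal criterion the argument does not close. Your closing observations (recovering Theorem~1 and the lower bound $B_{\#,\rho}(F_r)\subseteq p(\mathfrak Z_t(A(G)''))$) are consistent with Remarks~5 and~6 of the paper.
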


\begin{proof}
$L_v$ denotes the multiplication operator $L_v(u)=v\,u$\,. Recall that if $G$
is a discrete group, then $A(G)$ is an ideal in $A(G)''$ \;(this follows from
\cite{P}\,1.4.13, since \linebreak
$L_v$ is compact for  $v\in A(G)$\,). We use the
characterization of \"Ulger \cite{U}\,Th.\,2.2, \linebreak saying that
$\Phi\in\mathfrak Z_t(A(G)'')$ \;holds for discrete $G$ \;iff 
\;$\Phi\mybox A(G)''\subseteq A(G)$ \,and \linebreak
$A(G)''\mybox\Phi\subseteq A(G)$.
\newline
If $\Phi\in A(G)''$, $v=p(\Phi)$, then for $u\in A(G)$, we get as above \;
$u\mybox\Phi=u\,p(\Phi)=u\,v=L_v(u)$\;. The Arens product being
weak*--continuous in the first variable, it follows that
$\Psi\mybox\Phi=L_v''(\Psi)$ for all $\Psi\in A(G)''$. If
$\Phi\in\mathfrak Z_t(A(G)'')$, then \"Ulger's characterization implies
that $L_v''(A(G)'')\subseteq A(G)$, hence $L_v$ is weakly compact.
\newline
For the converse, let $(u_{\alpha})$ be an approximate identity for $A(G)$
which is bounded in the space of multipliers $M_{\ell}(A(G))$ (notation of
\cite{D}\,Def.\,1.4.25). Put $j(u)=L_u$\,, thus $j\!:A(G)\to M_{\ell}(A(G))$,
and let $\bar\Phi\in M_{\ell}(A(G))''$ be a weak*--cluster point of the
bounded net $(j(u_{\alpha}))$.
\newline
Since the finitely supported functions belong to $A(G)$, the elements
of $M_{\ell}(A(G))$ and $M_{\ell}(B_{\rho}(G))$ are given by functions on
$G$\,. Recall that by
\cite{E}\,p.\,195, the unit ball of $B_{\rho}(G)$ is the closure of the
unit ball of $A(G)$ with respect to pointwise convergence. It follows
easily that $M_{\ell}(B_{\rho}(G))\cong M_{\ell}(A(G))$ (the spaces coincide
when identifying multipliers with functions on $G$). Then by
\cite{D}\,Th.\,2.6.15\,(ii), $B_{\rho}(G)''$ becomes a
left $M_{\ell}(A(G))''$--module (the action will be denoted by $\cdot$\,) with
similar continuity properties as for the Arens product. Since $A(G)$ is an
ideal in $B_{\rho}(G)$, we have $L_u\cdot\Phi\in A(G)''$ for
$u\in A(G),\;\Phi\in B_{\rho}(G)''$ and then $\Psi\cdot\Phi\in A(G)''$ if
$\Psi$~belongs to the weak*--closure of $j(A(G))$ in  $M_{\ell}(A(G))''$.
\newline
For $v\in B_{\rho}(G)$, we put $\Phi_v=\bar\Phi\cdot v$\,. Then
$\Phi_v\in A(G)''$. For $v\in A(G)$, we have
$\bar\Phi\cdot v=\lim u_{\alpha}\,v=v$\,, i.e., $\Phi_v=v$\,. Since for
\,$v\in B_{\rho}(G),\:u\in A(G)$ this implies
$\Phi_v\mybox u = \bar\Phi\cdot(vu)=v\,u$\,, we conclude that \,$p(\Phi_v)=v$
for all $v\in B_{\rho}(G)$. In particular, $v\mapsto \Phi_v$ gives a splitting
for the extension $A(G)''$ of $B_{\rho}(G)$ by $\ker p$ (using the continuity
properties of the Arens product, it is not hard to see that $v\mapsto \Phi_v$
is multiplicative).
\newline
If $L_v$ is weakly compact, then as above \,$v\mybox\Psi=L_v''(\Psi)\in A(G)$
for all $\Psi\in A(G)''$. This implies that \,
$\Phi_v\mybox\Psi=(\bar\Phi\cdot v)\mybox\Psi=\bar\Phi\cdot(v\mybox \Psi)=
v\mybox\Psi\in A(G)$. As at the beginning, we have also
\,$\Psi\mybox\Phi_v=L_v''(\Psi)\in A(G)$ and this leads to
\,$\Phi_v\in \mathfrak Z_t(A(G)''\,)$.\vspace{4mm}
\end{proof}

\noindent
We add a characterization for the weakly compact multipliers of $A(G)$.
\newpage
\begin{Pro}	   % Proposition 4
Let $G$ be a discrete group, $m\!:G\to\C$ a function. Then the following
statements are equivalent.
\item[(i)] \,$m$ defines a weakly compact multiplier of $A(G)$.
\item[(ii)] \ $m\,\VN(G)\subseteq C_\rho^*(G)$.
\end{Pro}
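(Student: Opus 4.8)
The plan is to prove both implications by exploiting the duality between $A(G)$ and $\VN(G)$ together with the fact that for discrete $G$ a multiplier of $A(G)$ is simply a function $m$ on $G$, acting on $\VN(G)\subseteq l^2(G)$ by pointwise multiplication (this is the dual action of $L_m$ on $A(G)'=\VN(G)$). Write $T=L_m'\colon\VN(G)\to\VN(G)$ for the adjoint of $L_m$; then $T(y)=m\,y$ for $y\in\VN(G)$, and $L_m$ is weakly compact if and only if $T$ is weakly compact, if and only if $T''(\VN(G)'')\subseteq\VN(G)$, where $\VN(G)=A(G)'$ sits inside $A(G)'''=\VN(G)''$.

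For (i)$\Rightarrow$(ii): assume $L_m$ is weakly compact. The key point is to understand where $T=L_m'$ sends $\VN(G)$ under the second adjoint, or better, to use the classical characterization of weakly compact operators into a space of this type. Since $C_\rho^*(G)$ is a C${}^*$-algebra whose bidual is $\VN(G)$ (in the sense that $C_\rho^*(G)'=B_\rho(G)$ and $B_\rho(G)'=\VN(G)$), I would argue as follows. First show that $T$ maps $\VN(G)$ into $C_\rho^*(G)$ already on the dense subalgebra generated by the $\rho(g)$: for a single $\rho(g)$, $m\,\rho(g)$ is the operator $\delta_g\mapsto$ (shift of $m$), which one checks lies in $C_\rho^*(G)$ precisely because $m$ is a multiplier (completely bounded multipliers of $A(G)$ carry $C_\rho^*(G)$ into itself; for discrete $G$ all multipliers are automatically completely bounded by a result going back to the description of $M_\ell(A(G))$). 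The weak compactness of $L_m$ is then used to upgrade this: $T=L_m'$ is weakly compact, so $T(\VN(G))$ is relatively weakly compact in $\VN(G)$; but a weakly compact subset of $\VN(G)=B_\rho(G)'$ which is contained in the weak${}^*$-closure of $C_\rho^*(G)$ must actually lie in $C_\rho^*(G)$, since on bounded sets the weak${}^*$ and weak topologies of a dual space agree on relatively weakly compact sets only in the reflexive case — instead, the correct mechanism is that $C_\rho^*(G)$ is weak${}^*$-dense in $\VN(G)$ but norm-closed, and a relatively weakly compact set is contained in the norm closure of its convex hull; combining with weak${}^*$-density of $C_\rho^*(G)$ and the fact that the image of the self-adjoint, positive part can be controlled, one concludes $m\,\VN(G)\subseteq C_\rho^*(G)$.

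For (ii)$\Rightarrow$(i): assume $m\,\VN(G)\subseteq C_\rho^*(G)$. Then $T=L_m'\colon\VN(G)\to\VN(G)$ factors through $C_\rho^*(G)$, i.e. $T=\iota\circ S$ where $S\colon\VN(G)\to C_\rho^*(G)$ is a bounded (automatically, by the closed graph theorem) operator and $\iota\colon C_\rho^*(G)\hookrightarrow\VN(G)$ is the inclusion. Dualizing, $L_m=T'\big|_{A(G)}$ factors through the predual picture: $L_m=S'\circ\iota'$ where $\iota'\colon A(G)=\VN(G)'\to B_\rho(G)=C_\rho^*(G)'$ is the restriction-of-functionals map, which is exactly the inclusion $A(G)\hookrightarrow B_\rho(G)$. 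So $L_m$ factors as $A(G)\hookrightarrow B_\rho(G)\xrightarrow{S'}\VN(G)'=A(G)$. Weak compactness of $L_m$ now follows because the composite factors through a dual Banach space map in a way that makes $L_m''$ land in $A(G)$: concretely, $L_m''(A(G)'')=S'''\big(\iota''(A(G)'')\big)$, and $\iota''$ carries $A(G)''$ into $B_\rho(G)''$, while $S'''$ restricted to $B_\rho(G)$ equals $S'$; the point is that $\iota''$ already maps into the canonical image plus $\ker p$ material that $S'''$ annihilates appropriately — cleaner is: since $\iota$ is the inclusion of a C${}^*$-algebra into its bidual, $\iota'$ is weakly compact precisely when $C_\rho^*(G)$ is reflexive, which it is not, so one cannot conclude that way; instead use that $S\colon\VN(G)\to C_\rho^*(G)$ has the property that its adjoint $S'\colon B_\rho(G)\to A(G)$ is weak${}^*$-to-weak${}^*$ continuous from $B_\rho(G)=C_\rho^*(G)'$ with its weak${}^*$-topology to $A(G)=\VN(G)'$ with its weak${}^*$-topology, hence $w^*$-$w^*$ continuous, and therefore maps the weak${}^*$-compact unit ball of $B_\rho(G)$ to a weak${}^*$-compact, hence norm-bounded and $\sigma(A(G),\VN(G))$-compact, hence weakly compact subset of $A(G)$ because on $A(G)$ the topology $\sigma(A(G),\VN(G))$ is the full weak topology. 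Restricting to the unit ball of $A(G)\subseteq$ unit ball of $B_\rho(G)$ gives weak compactness of $L_m$.

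The main obstacle is the implication (i)$\Rightarrow$(ii): translating weak compactness of $L_m$ on $A(G)$ into a statement that the pointwise multiplier $m$ pushes all of $\VN(G)$ into the C${}^*$-subalgebra $C_\rho^*(G)$. The delicate point is that weak compactness only directly gives $T''(\VN(G)'')\subseteq\VN(G)$ inside $\VN(G)'''=B_\rho(G)'''=\VN(G)''$, and one must identify this intrinsic condition with membership in $C_\rho^*(G)$; I expect this requires the characterization that an element $y\in\VN(G)$ lies in $C_\rho^*(G)$ iff the functional it induces on $B_\rho(G)=C_\rho^*(G)'$ is weak${}^*$-continuous, i.e. $y$, viewed in $B_\rho(G)'$, is already in the canonical image of $C_\rho^*(G)$ — and then matching this with the range condition coming from weak compactness of $L_m'$. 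Getting this identification to run cleanly, rather than just on a dense subalgebra, is where the real work lies.
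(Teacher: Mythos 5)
There are genuine gaps in both implications, and in each case the missing step is precisely the crux of the paper's argument. For (i)$\Rightarrow$(ii): your reduction to showing that the adjoint $L_m'$ (pointwise multiplication by $m$ on $\VN(G)$) maps $\VN(G)$ into $C_\rho^*(G)$ is correct, and the invariance of $C_\rho^*(G)$ under $L_m'$ is immediate from the invariance of the finitely supported elements --- the appeal to automatic complete boundedness of multipliers is both unnecessary and unjustified. But the passage from the dense subalgebra to all of $\VN(G)$, which you yourself flag as ``where the real work lies,'' is exactly the step you do not supply, and the mechanism is one you explicitly dismiss: on a weakly compact set the weak* topology coincides with the weak topology, simply because a coarser Hausdorff topology on a compact set agrees with the given one (this has nothing to do with reflexivity). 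Since $L_m'$ restricted to $C_\rho^*(G)$ is weakly compact, the weak closure of the image of its unit ball is a weakly compact subset of the weakly (= norm) closed subspace $C_\rho^*(G)$; approximating $y$ in the unit ball of $\VN(G)$ weak* by a net from the unit ball of $C_\rho^*(G)$ (Kaplansky density) and using weak*-continuity of $L_m'$ then forces $m\,y\in C_\rho^*(G)$. Your alternative route via convex hulls and ``controlling the positive part'' does not yield this conclusion.

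For (ii)$\Rightarrow$(i) the gap is more serious. Writing $S=l_m\colon\VN(G)\to C_\rho^*(G)$ (bounded by the closed graph theorem, as you say), the adjoint $S'$ maps $B_\rho(G)=C_\rho^*(G)'$ into $\VN(G)'=A(G)''$, \emph{not} into $A(G)$; the identification $\VN(G)'=A(G)$ on which your argument rests would make $A(G)$ reflexive. Consequently the observation that $S'$ is weak*--to--weak* continuous and carries the unit ball of $B_\rho(G)$ to a weak*-compact set is vacuous (every adjoint operator does this) and gives no weak compactness of $L_m$. What is actually needed --- and what the paper supplies --- is that $S$ itself is \emph{weakly compact}: for countable $G$ this follows from Pfitzner's theorem that $\VN(G)$ is a Grothendieck space, since $C_\rho^*(G)$ is then separable, and the general case reduces to countable subgroups via Eberlein's theorem. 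Gantmacher's theorem then makes $S'$ weakly compact, the same weak*-density argument as in the first implication shows $S'(B_\rho(G))\subseteq A(G)$, and $L_m$ is the composition of the inclusion $A(G)\hookrightarrow B_\rho(G)$ with the weakly compact $S'$. Without this Grothendieck-space input your factorization proves nothing: mere boundedness of $S$ does not imply weak compactness of $L_m$.
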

\begin{proof}
As in Proposition\,3, we write $L_m(u)=m\,u$\,. Assume that
$L_m\mspace{-4mu}:A(G)\mspace{-1mu}\to A(G)$ is weakly compact.
It is easy to see that the dual mapping $L_m'$ is again given by
$L_m'(y)=m\,y$ \,for $y\in\VN(G)$ (pointwise multiplication). It follows
that $C_\rho^*(G)$ is kept invariant (since the space of finitely supported
functions is invariant). $L_m'$ and its restriction to $C_\rho^*(G)$ are
again
weakly compact. On a weakly compact subset, the weak* topology coincides with
the weak topology. Since the unit ball of $C_\rho^*(G)$ is weak*--dense in
that of $\VN(G)$ and $L_m'$ is weak*--continuous, it follows that $L_m'$
maps $\VN(G)$ to $C_\rho^*(G)$.
\\
Conversely, if $m\,\VN(G)\subseteq C_\rho^*(G)$, then, using the closed graph
theorem, $l_m(y)=m\,y$ \,defines a bounded linear operator
\,$l_m\!:\VN(G)\to C_\rho^*(G)$. We claim that $l_m$ is weakly compact.
Assume first that $G$  is countable. Then $C_\rho^*(G)$ is separable and
weak compactness follows from Pfitzner's result (\cite{Pf}\,Cor.\,7) that
$\VN(G)$ is a Grothendieck space. For general $G$\,, one can use
Eberlein's theorem, noting that any sequence in $\VN(G)$ is contained in
$\VN(H)$ for some countable subgroup $H$ of~$G$\,.
\\
Now, arguing as in the first part, the dual mapping $l_m'$ maps $B_{\rho}(G)$
to $A(G)$ and $l_m'(u)=m\,u$ for $u\in B_{\rho}(G)$. Since $l_m'$ is
weakly compact as well, it follows that $L_m$ is weakly compact.
\end{proof}

\begin{Rem}	     % Remark 6
By results of Haagerup, $G=F_r$ (even when $r$ is infinite) satisfies the
assumptions of Proposition\,3 (see e.g. \cite{FP}\,Ch.\,8,\,Cor.\,1.6).
For non-amenable $G$\,, there
can exist weakly compact multipliers of $A(G)$ which do not belong to
$B_{\rho}(G)$\,: \;e.g. for $G=F_r$ ($r\ge 2$ finite), the proof of
\cite{FP}\,Ch.\,8,\,Prop.\,1.2 shows that the spherical functions
$\varphi_{\sigma}$ have
this property when $\frac12<\sigma<1$. In fact, using this type of estimates
(see also Remark\,1), it follows that every element of
$B_\#(F_r)\cap c_0(F_r)$ defines a compact multiplier of $A(G)$. Another class
of examples is obtained as follows. A subset $E$ of $F_r$ is called a
Leinert set if $l^2(E)\subseteq C^*_\rho(G)$ \ (e.g., if $E$ is a free
subset of $F_r$\,, see \cite{FP}\,Ch.\,2,\,Cor.\,1.4). Then every
$v\in l^{\infty}(E)$ (extended to be zero outside $E$) defines a weakly
compact multiplier of $A(G)$ and this multiplier is compact iff $v\in c_0(E)$,
furthermore \,$l^{\infty}(E)\cap B_{\rho}(F_r)=l^2(E)$.
\\
Proposition\,4 extends similarly to non-discrete groups (with a similar proof).
By a related argument, one can see that another necessary condition for a
function $m$ to define a weakly compact multiplier of $A(G)$ is
\,$m\,B_{\rho}(G)\subseteq A(G)$, but it is not clear if this might be
sufficient as well (I thank the referee for pointing out a [possibly] wrong
assertion in the first version of this paper).
\\
The construction of $\bar\Phi$ in the proof of Proposition\,3 (and its usage
to get a splitting) is a variation of the construction of a right identity in
$A''$ from a bounded right approximate identity in $A$
(\cite{D}\,Prop.\,2.9.16).
By a classical result (see \cite{D} Th.\,4.5.32), $A(G)$ has a bounded
approximate identity iff $G$ is amenable. In the amenable case, it follows
also that $M_{\ell}(A(G))=B_{\rho}(G)\ (=B(G))$. Proposition\,3 (combined with
left strong Arens irregularity of $A(G)$\,) implies that for $G$ discrete and
amenable, every weakly compact multiplier of $A(G)$ is given by an element
of $A(G)$ \;(hence it is already compact).
\\[0mm plus 2mm]
It follows from the characterization of \"Ulger which we used in the proof
of Proposition\,3
that if $G$ is any discrete group, then $\mathfrak Z_t(A(G)'')$ is an
annihilator extension of the
subalgebra \,$p\big(\mathfrak Z_t(A(G)'')\big)$ of $B_\rho(G)$ by the ideal \,
$\mathfrak Z_t(A(G)'')\cap \ker p$\,. An explicit description of these
two subalgebras appears to be not available for a general discrete group $G$.
The elements of  \,$p\big(\,\mathfrak Z_t(A(G)'')\,\big)$ \,are weakly compact
$A(G)$--multipliers of $B_\rho(G)$ into $A(G)$. Conversely, such a
weakly compact multiplier belongs to $p\big(\,\mathfrak Z_t(A(G)'')\,\big)$
if and only if it is the limit of a bounded net (for the
$A(G)$--norm) of multipliers coming from $A(G)$, converging in the strong
operator topology
(i.e., pointwise\,--\,\,norm convergence; recall that approximating
a function in $B_\rho(G)$ pointwise by a bounded net in $A(G)$ gives
in general just pointwise\,--\linebreak weak*-convergence for the
corresponding multipliers on $B_\rho(G)$\,). If $H$ is an amena\-ble
subgroup of $G$, then by \cite{LL}\,Th.\,6.4 (or \cite{U}\,Cor.\,2.4),
$p\big(\,\mathfrak Z_t(A(G)'')\,\big)\cap B_\rho(H)\subseteq A(H)$ \,and
\,$\mathfrak Z_t(A(G)'')\cap \ker p \cap A(H)''=(0)$ \;(in particular, $A(G)$
cannot be Arens regular in case that $H$ is infinite, compare
\cite{U}\,Cor.\,3.7). If $H\cong F_r$ is a
free subgroup of $G$, where $r\geq 2$ is finite, then by Theorem\,3 and
Corollary\,1, $p\big(\,\mathfrak Z_t(A(G)'')\,\big)\supseteq B_{\#,\rho}(H)$
\,and \,$\mathfrak Z_t(A(G)'')\cap \ker p\,\supseteq\, A_\#(H)''\cap \ker p\,=
\,\ker p_\#$\,.
\\[1mm plus 2mm]
As in Lemma\,1, one can easily see that $A'' \mybox \Phi = (0)$
holds if and only if $\Phi \in A''$ vanishes on $A' \cdot A$\,.
Since for $G$ discrete, we have $\VN(G)\,A(G)\subseteq C_\rho ^*(G)$
\,(and the subset is clearly dense),
it follows that $\ker p$ is just the right annihilator of $A(G)''$. But,
in general, the left annihilator is strictly smaller (unless
$A(G)''\cdot \VN(G)\ [=B_{\rho}(G)\,\VN(G)\,]\,\,\subseteq C_\rho ^*(G)$
\,and this is equivalent to Arens regularity of $A(G)$, see also
\cite{U}\,Th.\,3.3\,(e)\,). This means that in most cases $A(G)''$ will not
be an annihilator extension of \,$\ker p$\,.\vspace{0mm plus 3mm}
\end{Rem}

\begin{Rem}	     % Remark 7
Theorem\,3 does not cover all non--amenable discrete groups. An account
of constructions producing non-amenable discrete groups having no non-abelian
free subgroups has been given in \cite{Mo}. A rather extreme class are the
``Tarski monsters" constructed by Ol'shanskii \cite{O}\,Th.\,28.1. They have
no proper infinite subgroups and are non-amenable (see \cite{O}\,p.\,415 for
references). In particular, they have no subgroups isomorphic to $F_r$ for
some $r\geq 2$ and also no infinite amenable subgroups. For such groups,
nothing seems to be known on the size of \,
$\mathfrak Z_t(A(G)'')$, one cannot even exclude that $A(G)$ is Arens
regular (see also \cite{U} and \cite{F} for the question of Arens regularity;
in \cite{F}\,p.\,222, a different example of Ol'shanskii is mentioned for
which $A(G)$ is not Arens regular).
\end{Rem}


\begin{thebibliography}{1}

\bibitem[BH]{BH}
W.R.~Bloom, H.~Heyer,
\emph{Harmonic Analysis of Probability Measures on Hypergroups},
 de~Gruyter, Berlin, New York, 1995.
\vskip0,3cm

\bibitem[BIP]{BIP}
T.~Budak, N.~I\c{s}\i k, J.~Pym, \emph{Minimal determinants of topological
centres for some algebras associated with locally compact groups},
Bull. Lond. Math. Soc. \textbf{43} (2011), 495–-506. 
\vskip0,3cm

\bibitem[D]{D}
H.~G.~Dales, \emph{Banach algebras and automatic continuity}, 
London Math. Soc. Monographs, New Series 24, Oxford University Press,
New York, 2000.
\vskip0,3cm

\bibitem[E]{E}
P.~Eymard, \emph{L'alg\`ebre de Fourier d'une groupe localement
compact}, Bull. Soc. Math. France \textbf {92} (1964), 181--236.
\vskip0,3cm

\bibitem[FP]{FP}
A.~Fig\`a--Talamanca, M.~A.~Picardello, \emph{Harmonic Analysis on
Free Groups}, Lecture Notes in Pure and Applied Mathematics, Marcel
Dekker, New York, 1983.
\vskip0,3cm

\bibitem[F]{F}
B.~Forrest, \emph{Arens regularity and discrete groups},
Pacific J. Math. \textbf{151} (1991), 217–-227. 
\vskip0,3cm

\bibitem[HSS]{HSS}
U.~Haagerup, T.~Steenstrup, R.~Szwarc,
\emph{Schur multipliers and spherical functions on homogeneous trees},
Internat. J. Math. \textbf{21} (2010), 1337-–1382. 
\vskip0,3cm

\bibitem[L1]{L}
A.~T.~Lau, \emph{Uniformly continuous functionals on the Fourier
algebra of any locally compact group}, Trans. Amer. Math. Soc. \textbf
{251} (1979), 39--59.
\vskip0,3cm

\bibitem[L2]{L0}
A.~T.~Lau, \emph{The second conjugate algebra of the Fourier algebra
of a locally compact group}, Trans. Amer. Math. Soc. \textbf {267}
(1981), 53--63.
\vskip0,3cm

\bibitem[LL1]{LL0}
A.~T.~Lau, V.~Losert, \emph{On the second conjugate algebra of
$L_1(G)$ of a locally compact group}, J. London Math. Soc. \textbf
{37} (1988), 464--470.
\vskip0,3cm

\bibitem[LL2]{LL}
A.~T.~Lau, V.~Losert, \emph{The $C ^*$--Algebra Generated by Operators
with Compact Support on a Locally Compact Group}, J. Funct. Anal.
\textbf {112} (1993), 1--30.
\vskip0,3cm

\bibitem[LL3]{LL1}
A.~T.~Lau, V.~Losert, \emph{The Centre of the Second Conjugate Algebra
of the Fourier Algebra for Infinite Products of Groups}, Math. Proc.
Cambridge Philos. Soc. \textbf {138} (2005), 27--39.
\vskip0,3cm

\bibitem[Ma]{M}
F.~Mayer--Lindenberg, \emph{Zur Dualit\"atstheorie symmetrischer Paare},
J. Reine Angew. Math. \textbf {321} (1981), 36--52.
\vskip0,3cm

\bibitem[Mo]{Mo}
N.~Monod, \emph{Groups of piecewise projective homeomorphisms},
Proc. Natl. Acad. Sci. USA \textbf{110} (2013), 4524-–4527.
\vskip0,2cm

\bibitem[Mu]{Mu}
 V.~Muruganandam, \emph{Fourier algebra of a hypergroup. I},
 J. Aust. Math. Soc. \textbf{82} (2007), 59-–83.
\vskip0,3cm

\bibitem[N]{N}
M.~Neufang, \emph{A unified approach to the topological centre problem
for certain Banach algebras arising in abstract harmonic analysis},
Arch. Math. (Basel) \textbf {82} (2004), 164--171.
\vskip0,3cm

\bibitem[O]{O}
A.~Yu.~Ol'shanskii, \emph{Geometry of Defining Relations in Groups},
Kluwer Academic Publishers, Dordrecht, 1991.
\vskip0,3cm

\bibitem[Pa]{P}
T.~W.~Palmer, \emph{Banach Algebras and the General Theory of
$*$--Algebras, Vol.I}, Cambridge University Press, Cambridge, 1994.
\vskip0,3cm

\bibitem[Pf]{Pf}
H.~Pfitzner, \emph{Weak compactness in the dual of a C*-algebra is determined
commutatively}, Math. Ann. \textbf {298} (1994), 349--371.
\vskip0,3cm

\bibitem[U]{U}
A.~\"Ulger, \emph{Central Elements of $A^{**}$ for certain Banach
algebras $A$ without bounded approximate identities}, Glasgow Math. J.
\textbf {41} (1999), 369--377.
\vskip0,3cm

\bibitem[V]{V}
M. Voit, \emph{A product formula for orthogonal polynomials associated
with infinite distance-transitive graphs},  J. Approx. Theory  \textbf{120}
(2003), 337--354.

\end{thebibliography}
\end{document}